\newcommand{\rank}{\operatorname{rank}}
\newcommand{\Aut}{\operatorname{Aut}}
\newcommand{\Ex}{\operatorname{Ex}}
\newcommand{\nklt}{\operatorname{nklt}}
\begin{document}
\title[On quasi-Albanese morphisms for log canonical Calabi-Yau pairs]{On quasi-Albanese morphisms for log canonical Calabi-Yau pairs}

\subjclass[2020]{14E30}

\begin{abstract}
In this note, we study quasi-Albanese morphisms for log canonical Calabi-Yau pairs and obtain several structural results. As an application, we prove a characterization of toric pairs.
\end{abstract}

\author{Yiming Zhu}
\address{School of Mathematical Science, University of Science and Technology of China, Hefei 230026, P.R.China.} \email{zym18119675797@gmail.com}

\maketitle

\setcounter{tocdepth}{1}
\tableofcontents

\section{Introduction}

Let $X$ be a smooth complex projective Calabi-Yau variety. By the Bogomolov-Beauville-Yau decomposition, the Albanese morphism of $X$ is isotrivial. Kawamata \cite[Theorem 8.3]{kawamata1985minimal} extended this isotriviality to varieties with canonical singularities, and Ambro \cite[Theorem 4.8]{Ambro2005moduli} to klt pairs. For log canonical Calabi-Yau pairs, Bernasconi, Filipazzi, Patakfalvi and Tsakanikas \cite[Theorem 1.5]{bernasconi2024strong} proved that the Albanese morphism is locally stable in the sense of Koll\'ar \cite[Definition-Theorem 4.7]{Kollar-Families}, but may not be isotrivial. In the setting of log pairs, it is more natural to consider logarithmic differential forms and quasi-Albanese morphisms by Iitaka \cite{iitaka1976logarithmic}, which capture finer structural information than the Albanese morphisms. 

In this note, we study quasi-Albanese morphisms for log canonical Calabi-Yau pairs. Our results are intended as a step toward a log canonical Bogomolov–Beauville–Yau decomposition.

We briefly recall the relevant definitions. A quasi-abelian variety $G$ is a quasi-projective commutative algebraic group fitting into an exact sequence \[0\to(\Cc^*)^d\to G\to A\to0,\] 
where $A$ is an abelian variety and $(\Cc^*)^d$ is an algebraic torus; we refer to $A$ as the compact part of $G$. Let $X$ be a quasi-projective variety. The quasi-Albanese morphism $\alpha:X\to G$ is a morphism to a quasi-abelian variety characterized by the following universal property: any morphism from $X$ to a quasi-abelian variety factors uniquely through $\alpha$. The variety $G$ is called the quasi-Albanese variety of $X$.  

We recall the following classical result by Kawamata.

\begin{thm}[{\cite[Theorem 28]{kawamata1981characterization}, see also \cite[Theorem 1.2]{fujino2024quasi}}]\label{thm:Kaw}
Let $(X,D_X)$ be a projective log smooth pair with $D_X$ reduced and $\kappa(K_X+D_X)=0$. Then the quasi-Albanese morphism $\alpha:X\setminus D_X\to G$ is dominant and has irreducible general fibers.
\end{thm}

We now turn to the singular setting. For a log canonical pair $(X,D_X)$, we denote by $\nklt(X,D_X)$ its non-klt locus. We strengthen Kawamata' Theorem as follows.

\begin{thm}\label{thm:KawFujSing}\
Let $(X,D_X)$ be a projective log canonical pair with $\kappa(K_X+D_X)=0$. Then the quasi-Albanese morphism $\alpha:X\setminus\nklt(X,D_X)\to G$ is dominant and has irreducible general fibers.
\end{thm}

The next theorem further analyzes the quasi-Albanese morphism $\alpha$ in the Calabi-Yau case.

\begin{thm}\label{thm1}
Let $(X,D_X)$ be a projective log canonical pair with $K_X+D_X\sim_{\Qq}0$. Let $\alpha:X\setminus\nklt(X,D_X)\to G$ be the quasi-Albanese morphism. Then the following holds.
\begin{enumerate}
    \item $\alpha$ is surjective in codimension one: for any prime divisor $P\subset G$, there is a prime divisor $Q\subset X^0$ dominating $P$.
    \item $\alpha:(X\setminus\nklt(X,D_X),D_X|_{X\setminus\nklt(X,D_X)})\to G$ is locally stable. In particular, $\alpha$ is flat and every fiber is reduced with semi-log canonical singularities.
\end{enumerate} 
\end{thm}

Let $G$ be a quasi-abelian variety and let $A$ be its compact part. Then there are $d:=\dim G-\dim A$ line bundles $L_i\in \text{Pic}^0(A)$ such that $G$ is the fiber product over $A$ of the total spaces of the $L_i$ with their zero sections removed. Consequently, $G$ admits the following natural compactification:\[
G\subset\Pp_A:=\Pp_A(O_A\oplus L_1^{-1})\times_A\cdots\times_A \Pp_A(O_A\oplus L_d^{-1}).  \] Note that $H:=\Pp_A\setminus G$ is a simple normal crossing divisor, and the log smooth pair $(\Pp_A,H) $ is Calabi-Yau: $K_{\Pp_A}+H\sim0$. We refer to $(\Pp_A,H)$ as the canonical compactification of $G$.

Let $(X,D_X)$ be a log canonical pair. Via the quasi-Albanese morphism $\alpha:X\setminus\nklt(X,D_X)\to G$, we aim to 
extract structural information
about the pair $(X,D_X)$ itself, rather than merely about the open subset $X\setminus\nklt(X,D_X)$. To this end, we consider the canonical compactification $(\Pp_A,H)$ of $G$ and the induced rational map $X\dashrightarrow\Pp_A$. In general, however, this rational map need not be a morphism (see Example \ref{example}). The following theorem asserts that for a Calabi-Yau pair $(X,D_X)$, one can always find another Calabi-Yau pair $(W,D_W)$, crepant birational to $(X,D_X)$, such that the induced map $W\dashrightarrow\Pp_A$ is a morphism.

\begin{thm}\label{thm2}
Let $(X,D_X)$ be a projective log canonical pair with $K_X+D_X\sim_{\Qq}0$. Let $\alpha:X\setminus\nklt(X,D_X)\to G$ be the quasi-Albanese morphism. Let $(\Pp_A,H)$ be the canonical compactification of $G$. Then there are 
\begin{itemize}
\item a projective $\Qq$-factorial dlt pair $(W,D_W)$ 
with $K_W+D_W\sim_{\Qq}0$, and 
\item a crepant birational contraction $\mu:(W,D_W)\dashrightarrow (X,D_X)$ such that $D_W=\mu^{-1}_*D_X+\sum P$, where the sum runs over all prime divisors on $W$ that are exceptional over $X$,

\end{itemize} satisfying the following properties:

\begin{enumerate}
\item The induced rational map $g:W\dashrightarrow \Pp_A$ is a morphism, which is surjective and has connected fibers.

\item The canonical bundle formula for $g:(W,D_W)\to \Pp_A$ takes the form \[K_W+D_W\sim_{\Qq}g^*(K_{\Pp_A}+H),\] and the restriction $(W,D_W)|_{g^{-1}(G)}\to G$ is locally stable.

\item The support of $g^{*}H$ is contained in the non-klt locus $\nklt(W,D_W)$ and the induced morphism \[\beta:W\setminus\nklt(W,D_W)\to G\] is the quasi-Albanese morphism of $W\setminus \nklt(W,D_W)$.

\end{enumerate}

\end{thm}

As a byproduct of our study, we find a connection between the Shokurov complexity of log canonical pairs and their quasi-Albanese varieties. For a log canonical pair $(X,D_X)$, its complexity $c(X,D_X)$ is defined as \[c(X,D_X):=\dim X+\rank\la\lf D_X\rf\ra-|\lf D_X\rf|,\]
where \begin{itemize}
    \item $\la\lf D_X\rf\ra$ is the subgroup generated by the components of $\lf D_X\rf$ in the group of Weil divisors modulo algebraic equivalence, and
    \item $|\lf D_X\rf|$ is the sum of the coefficients of $\lf D_X\rf$.
\end{itemize} For simplicity, we adopt a definition slightly different from that in \cite{Brown2018GeomToric}. In that work, Brown, McKernan, Svaldi, and Zong proved a conjecture of Shokurov \cite{Shokurov2000CompleSurf} characterizing toric varieties via complexities. See also \cite{Prokhorov2001Toric,yao2013thesis,Nakayama2017Shokurov,enwright2024log} for related results.

In this note, we show the following.

\begin{lem}\label{lem:Comp&Alb}
Let $(X,D_X)$ be a projective log canonical pair. Let $G$ be the quasi-Albanese variety of $X\setminus\nklt(X,D_X)$, and let $A$ be its compact part. Then \[c(X,D_X)=\dim X-(\dim G-\dim A).\]    
\end{lem}

\begin{thm}\label{cor1}
Let $(X,D_X)$ be a projective log canonical pair with $\kappa(K_X+D_X)=0$. Then, for any reduced divisor $0\leq \Delta_X\leq\lf D_X\rf$, we have \[c(X,\Delta_X)\geq0.\] 
Assume that the equality holds for some $\Delta_X$. Then $X$ is rational. Assume further that $K_X+D_X\sim_{\Qq}0$, then the following holds.
\begin{enumerate}
    \item $\Delta_X=\lf D_X\rf=D_X$.
    \item The quasi-Albanese morphism \[\alpha:X\setminus \nklt(X,D_X)\to(\Cc^*)^n\] is birational and isomorphic in codimension one.
    \item There is a crepant birational map \[(X,D_X)\dashrightarrow(\Pp^1_{z_1}\times\cdots\times\Pp^1_{z_n},\sum_i((z_i=0)+(z_i=\infty))).\]
    \item The pair $(X,D_X)$ is toric.
\end{enumerate}  
\end{thm}

A result analogous to Theorem~\ref{thm:Kaw} or Theorem~\ref{thm:KawFujSing} was obtained by Cadorel, Deng, and Yamanoi \cite{Deng2024HyperFund} for Campana’s special manifolds. Recall that projective manifolds with nef anticanonical bundle are special (cf. \cite[Example 2.3]{Campana2004Orbifolds}). 
Moreover, Shokurov’s conjecture \cite{Shokurov2000CompleSurf} only assumes the nefness of $-(K_X+D_X)$. It is therefore natural to expect that the main Theorems of this note remain valid for log canonical pairs with $-(K_X+D_X)$ nef.

The note is organized as follows. In Section 2, we recall the relevant definitions and develop the necessary technical tools. In Section 3, we prove the main theorems.

\subsection*{Acknowledgments}
The author thanks professors Zhan Li, Lei Zhang, Junchao Shentu, and Osamu Fujino for helpful discussions or comments. The author thanks professor Ya Deng for drawing his attention to related results in joint work with Benoit Cadorel and Katsutoshi Yamanoi concerning hyperbolicity and fundamental groups for quasi-projective varieties. The author is supported by USTC.
\section{Preparations}

We work over the field of complex numbers. For the singularities of pairs and the minimal model program, we refer to \cite{KM98,Kollar2013sing}. 

A dominant morphism between normal varieties with irreducible general fibers is called a fibration. Let $f:X\to Y$ be a projective fibration between normal varieties. Let $D$ be an effective $\Qq$-divisor on $X$. We can write $D=D^h+D^v$, where $D^v$ does not dominate $Y$, and each component of $D^h$ dominates $Y$. $D^v$ (resp. $D^h$) is called the vertical (resp. horizontal) part of $D$. $D$ is called vertical (resp. horizontal) if $D=D^v$ (resp. $D=D^h$). Assume that $D$ is vertical, then $D$ is called exceptional over $Y$ if $\codim f(D)\geq2$, and degenerate over $Y$ if for any prime divisor $P\subset Y$, there is a prime divisor $Q\subset X$ such that $Q$ dominates $P$ but $Q\not\subset D$.

For a birational map $\pi:X\dashrightarrow Y$ between normal varieties, we set $\Ex(\pi)=\sum P$, where the sum is taken over all prime divisors on $X$ which are exceptional over $Y$. A birational map $\pi:X\dashrightarrow Y$ is called a birational contraction if $\Ex (\pi^{-1})=0$.

We will use the following fact implicitly: Let $A$, $B$ be two effective $\Qq$-Cartier $\Qq$-divisors on a projective variety $X$. If $\Supp A=\Supp B$, then $\kappa(A)=\kappa(B)$. If $\Supp A\subset \Supp B$, then $\kappa(A)\leq\kappa(B)$. Let $f:Y\to X$ be a projective surjective morphism from another projective variety, then $f^{-1}\Supp A=\Supp f^*A$.

For a projective variety $X$, we set $q(X)=\dim H^1(X,O_X)$, and for a projective lc pair $(X,D_X)$, we set $q(X,D_X)=\dim H^0(X^0, \Omega_{X^0}^1(\log \lf D_X|_{X^0}\rf))$, where $X^0$ is the log smooth locus of $(X,D_X)$.

\subsection{Differential forms on lc pairs}

\begin{thm}[\cite{greb2011differential}]\label{thm:extension} Let $(X,D_X)$ be an lc pair. Let $X^{0}$ be its log smooth locus and let $\nklt(X,D_X)$ be its non-klt locus. Let $\pi:Y\to X$ be a log resolution of $(X,D_X)$ and let $D_Y$ be the codimension one part of $\pi^{-1}\nklt(X,D_X)$. Then $\pi_*\Omega_Y^k(\log  D_Y )$ is reflexive and $H^0(Y,\Omega_Y^k(\log  D_Y ))\cong H^0(X^0, \Omega_{X^0}^k(\log \lf D_X|_{X^0}\rf))$ for all $k$.
\end{thm}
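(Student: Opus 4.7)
The plan is to handle the two assertions—reflexivity of $\pi_*\Omega_Y^k(\log D_Y)$ and the identification of its global sections with $H^0(X^0,\Omega_{X^0}^k(\log\lfloor D_X|_{X^0}\rfloor))$—separately, reducing both to a local statement about a dlt modification of $(X,D_X)$.

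First I would establish the reflexivity. Since $\Omega_Y^k(\log D_Y)$ is locally free on $Y$ and $\pi$ is proper and birational, the pushforward is automatically coherent and torsion-free, so reflexivity reduces to checking the $S_2$ property at each codimension-two point of $X$. To verify this local statement, I would pass to a $\Qq$-factorial dlt modification $\phi:(X',D')\to (X,D_X)$, which exists by the MMP. The gain is that on a dlt pair the codimension-two strata of the boundary are snc along a dense open subset, and the remaining non-dlt behavior is confined to higher codimension. One then combines the classical extension of log differentials across snc divisors with a Hodge-theoretic input (a Steenbrink- or Koll\'ar--Saito-type vanishing in the log setting) to push sections across the problematic codimension-two loci, ensuring that the pushforward does not miss any meromorphic section with at worst log poles along $D_Y$.

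For the identification of global sections, I would observe that over $X^0$ the morphism $\pi$ is an isomorphism and $D_Y|_{\pi^{-1}(X^0)}$ coincides with $\lfloor D_X|_{X^0}\rfloor$, so $\pi_*\Omega_Y^k(\log D_Y)|_{X^0}\cong \Omega_{X^0}^k(\log\lfloor D_X|_{X^0}\rfloor)$. Since $X\setminus X^0$ has codimension at least two in $X$ and $\pi_*\Omega_Y^k(\log D_Y)$ is reflexive by the previous step, restriction identifies both $H^0(Y,\Omega_Y^k(\log D_Y))=H^0(X,\pi_*\Omega_Y^k(\log D_Y))$ and $H^0(X^0,\Omega_{X^0}^k(\log\lfloor D_X|_{X^0}\rfloor))$ with the same space.

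The main obstacle is the codimension-two extension step for dlt (and then lc) pairs. This is the technical heart of the Greb--Kebekus--Kov\'acs--Peternell theorem, and its proof genuinely requires the existence of a dlt modification from BCHM together with a delicate Hodge-theoretic vanishing argument. Without such an input there is no obvious way to rule out that a section of $\Omega_Y^k(\log D_Y)$ acquires extra log poles along $\pi$-exceptional divisors not contained in $D_Y$, which is precisely what the reflexivity statement forbids.
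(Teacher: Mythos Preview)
The paper does not give its own proof of this statement: it is quoted verbatim as a result of Greb--Kebekus--Kov\'acs--Peternell \cite{greb2011differential} and used as a black box. So there is nothing in the paper to compare your proposal against; the author simply cites the theorem.

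That said, two remarks on your sketch. First, your outline is essentially a summary of the GKKP strategy itself (dlt modification plus a Hodge-theoretic extension argument), as you yourself acknowledge in the last paragraph; it is accurate as a roadmap but not an alternative proof. Second, there is a small slip in the second step: for an arbitrary log resolution $\pi$ there is no reason that $\pi$ is an isomorphism over all of $X^0$---one may have blown up smooth centers meeting $X^0$. The correct version is that, since $X$ is normal, the image of the $\pi$-exceptional locus has codimension $\geq 2$ in $X$, so $\pi$ is an isomorphism over an open $U\subset X$ with $\operatorname{codim}(X\setminus U)\geq 2$; on $U\cap X^0$ one has $D_Y=\lfloor D_X\rfloor$, and then reflexivity (of both sides, the right-hand side being locally free on $X^0$) lets you extend the identification first to $X^0$ and then to global sections. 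With that adjustment your deduction of the $H^0$-identification from reflexivity is fine.
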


We will use the following Lemma repeatedly.

\begin{lem}\label{lem:compare}
Let $(X,D_X)$ be a projective lc pair. Let $\pi:Y\to X$ be a log resolution of $(X,D_X)$ and let $D_Y=\pi^{-1}_*D_X+\Ex(\pi)$. Then the following holds.
\begin{enumerate}
    \item $q(X,D_X)=q(Y,D_Y)$. 
    \item $\kappa(K_X+D_X)=\kappa(K_Y+D_Y)$ and $\nu(K_X+D_X)=\nu(K_Y+D_Y)$, where $\kappa$ and $\nu$ are Kodaira dimension and numerical Kodaira dimension respectively.
    \item If $(X,D_X)$ is dlt, then $q(X)=q(Y)$.
\end{enumerate} 
\end{lem}
\begin{proof}

Since $(X,D_X)$ is lc, there is an effective exceptional divisor $E$ on $Y$ such that $K_Y+D_Y=\pi^*(K_X+D_X)+E$, so we have (2). If $(X,D_X)$ is dlt, then $X$ has rational singularities by \cite[Theorem 5.22]{KM98}. Hence we have (3).

Let $D_Y'$ be the codimension one part of $\pi^{-1}\nklt(X,D_X)$. Note that $D_Y'\leq \lf D_Y\rf$. To show (1), it suffices to show that the injective homomorphism \[
\iota:H^0(Y,\Omega_Y^1(\log D_Y'))\to H^0(Y,\Omega_Y^1(\log \lf D_Y\rf))    
\] is an isomorphism. This isomorphism is a special case of \cite[Proposition 19.1]{greb2011differential}. Here we give this isomorphism a proof using Hacon's $\Qq$-factorial dlt modification \cite[Proposition 3.3.1]{Hacon2014ACC}.

Step 1. Assume that $(X,D_X)$ is $\Qq$-factorial dlt. Then $\nklt(X,D_X)=\lf D_X\rf$ and $D_{Y}'=q^{-1}\lf D_X\rf$. Write $ D_Y'=\sum_iP_i$ and $\lf D_Y\rf-D_Y'=\sum_jE_j$ as sums of prime divisors. By the exact sequence \[0\to\Omega_Y^1\to\Omega_Y^1(\log (\sum_iP_i+\sum_jE_j))\to \oplus_iO_{P_j}\oplus\oplus_jO_{E_j}\to 0,\]
one checks that $\iota$ is an isomorphism iff $\sum_ia_ic_1(O_Y(P_i))+\sum_jb_jc_1(O_Y(E_j))=0$ implies that all $b_j$'s are zero (see Lemma \ref{lem:ComputeOfq(X,D)}). This is indeed the case since all $E_j$'s are exceptional and $X$ is $\Qq$-factorial.

Step 2. Assume that $(X,D_X)$ is log smooth. By Iitaka \cite[Proposition 1]{iitaka1977logarithmicKodaira}, we have $q(X,\lf D_X\rf)=q(Y,\pi^{-1}\lf D_X\rf)$. Hence we have $q(X,\lf D_X\rf)=q(Y,\lf D_Y\rf)$ by Step 1, i.e., (1) holds for log smooth pairs.

Step 3. In general, consider the following diagram.

\begin{center}
\begin{tikzcd}
W \arrow[r, "q"] \arrow[d, "p"']      & Z \arrow[d, "\mu"] \\
Y \arrow[r, "\pi"] \arrow[ru, dashed] & X                 
\end{tikzcd}    
\end{center}
where 
\begin{itemize}
    \item $(Z,D_Z)$ is a good minimal model of $(Y,D_Y)$ over $X$, hence a Hacon's $\Qq$-factorial dlt modification of $(X,D_X)$ \cite[Proposition 3.3.1]{hacon2007shokurov}, and 
    \item $Y\xleftarrow{p}W\xrightarrow{q}Z$ is a common log resolution.
\end{itemize} 
Since $K_Z+D_Z=\mu^*(K_X+D_X)$, we have $\mu(\lf D_Z\rf)=\nklt$. Hence $\lf D_Z\rf\subset \mu^{-1}\nklt$ and $q^{-1}\lf D_Z\rf\subset (\mu\circ q)^{-1}\nklt$. Let $D_{W}'$ be the codimension one part of $(\mu\circ q)^{-1}\nklt$, then $q^{-1}\lf D_Z\rf\leq D_W'$. Let $D_W=q^{-1}_*D_Z+\Ex(q)$. Then $D_W=(\mu\circ q)^{-1}_*D_X+\Ex(\mu\circ q)$. By $q^{-1}\lf D_Z\rf\leq D_W'\leq \lf D_W\rf$ we have $q(W,q^{-1} \lf D_Z \rf)\leq q(W,D_W')\leq q(W, \lf D_W\rf)$. By Step 1, we have $q(W,q^{-1} \lf D_Z\rf)= q(W, \lf D_W\rf)$. Hence \begin{align}\label{equ1}
q(W,D_W')= q(W,\lf D_W\rf).    
\end{align}

By construction, we have $p^{-1}D_Y'\subset D_W'\subset p^{-1}_*D_{Y}'+\Ex(p)$. Hence $q(W,p^{-1}D_{Y}')\leq q(W,D_W')\leq q(W,p^{-1}_*D_{Y}'+\Ex(p))$. By step 1, we have $q(W,p^{-1}D_{Y}')=q(W,p^{-1}_*D_{Y}'+\Ex(p))$. Hence $q(W,D_W')= q(W,p^{-1}D_Y')$. By Iitaka \cite[Proposition 1]{iitaka1977logarithmicKodaira}, we have \begin{align}\label{equ2}
q(W,D_W')=q(W,p^{-1}D_Y')=q(Y,D_Y')   \end{align} By Step 2, we have \begin{align}\label{equ3}
q(W,\lf D_W\rf)=q(Y,\lf D_Y\rf)    
\end{align} Hence $q(Y,D_Y')=q(Y,\lf D_Y\rf)$ by the three equations \ref{equ1}, \ref{equ2} and \ref{equ3}.
\end{proof}

\begin{lem}\label{lem:compare2}
Let $(X,D_X)$ and $(Y,D_Y)$ be two projective lc pairs. Assume that there is a birational contraction $\pi:X\dashrightarrow Y$ such that $D_X=\pi^{-1}_*D_Y+\Ex(\pi)$ then $q(X,D_X)=q(Y,D_Y)$.
\end{lem}
\begin{proof}
Let $X\xleftarrow{p}W\xrightarrow{q} Y$ be a common log resolution. Then $q^{-1}_*D_Y+\Ex(q)=p^{-1}_*D_X+\Ex(p)$. Hence, the Lemma follows from Lemma \ref{lem:compare}.
\end{proof}

\subsection{Quasi-Albanese morphisms for smooth varieties}

\begin{thm}[\cite{iitaka1976logarithmic}, see also \cite{fujino2024quasi}]\label{thm:Iikata'quasiAlb}
Let $X$ be a smooth quasi-projective variety. Then there is a morphism $\alpha:X\to G$ to a quasi-abelian variety such that 

(1) for any morphism $\alpha':X\to G'$ to a quasi-abelian variety, there is a morphism $h:G\to G'$ such that $\alpha'=h\circ \alpha$, and

(2) $\alpha$ is uniquely determined.
\end{thm}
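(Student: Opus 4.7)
The plan is to construct $G$ directly from the space of logarithmic $1$-forms on a smooth compactification of $X$, and to establish the universal property by pulling back translation-invariant log $1$-forms from the target. First, by Hironaka, choose a smooth projective compactification $\overline{X}$ of $X$ such that $D := \overline{X}\setminus X$ is a simple normal crossing divisor, and set
\[
V := H^0\bigl(\overline{X}, \Omega^1_{\overline{X}}(\log D)\bigr)^{\vee}.
\]
The integration pairing $(\gamma, \omega) \mapsto \int_\gamma \omega$ defines a homomorphism $H_1(X, \mathbb{Z}) \to V$; let $\Lambda$ denote its image modulo torsion. Deligne's mixed Hodge structure on $H^1(X, \mathbb{C})$, with weight filtration $0 \subset W_1 \subset W_2 = H^1(X, \mathbb{C})$ and Hodge filtration satisfying $F^1 H^1(X, \mathbb{C}) = H^0(\overline{X}, \Omega^1_{\overline{X}}(\log D))$, then implies that $\Lambda$ is a discrete subgroup of $V$ of the correct rank, and that the quotient $G := V/\Lambda$ is a quasi-abelian variety fitting in an extension $1 \to T \to G \to A \to 0$, with $A = \mathrm{Alb}(\overline{X})$ and $T$ a torus of rank $h^0(\Omega^1_{\overline{X}}(\log D)) - h^0(\Omega^1_{\overline{X}})$.

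Next, fix a base point $x_0 \in X$ and define $\alpha \colon X \to G$ by $\alpha(x) = [\,\omega \mapsto \int_{x_0}^{x} \omega\,] \bmod \Lambda$. Path-ambiguity lies in $\Lambda$, so $\alpha$ is well-defined and visibly holomorphic; algebraicity follows from the extension theorem for bounded holomorphic maps across the SNC boundary $D$ combined with GAGA. For the universal property (1), given any morphism $\alpha' \colon X \to G'$ to a quasi-abelian variety $G'$, choose a smooth equivariant compactification $\overline{G'}$ with SNC boundary $B'$. Translation-invariant $1$-forms on $G'$ extend to global logarithmic $1$-forms on $\overline{G'}$, so pullback along $\alpha'$ induces a $\mathbb{C}$-linear map $V \to V'$; naturality of $H_1$ forces compatibility with the integration lattices, so this descends to a morphism $h \colon G \to G'$ of quasi-abelian varieties satisfying $\alpha' = h \circ \alpha$ after absorbing a translation into the base point. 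Uniqueness (2) then follows formally from the fact that $\alpha(X)$ generates $G$ as an algebraic group, which holds because $d\alpha$ has image spanning $V^{\vee}$ at a generic point.

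The main obstacle is the lattice statement underlying the construction of $G$, namely that the image $\Lambda$ of $H_1(X, \mathbb{Z})$ in $V$ is discrete and of exactly the right corank to produce a semi-abelian quotient. This is precisely the content of Deligne's mixed Hodge theory for smooth quasi-projective varieties: the graded pieces of the weight filtration on $H^1(X)$ provide the abelian and toric factors of $G$, while the Hodge filtration specifies the complex structure on $V$. Once this is in place, the remaining verifications (extension across the boundary, the universal property, and uniqueness) are either formal or standard consequences of the behavior of log $1$-forms on equivariant compactifications of quasi-abelian varieties.
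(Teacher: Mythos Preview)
The paper does not prove this statement; it is quoted from Iitaka and Fujino, and what follows (the block labelled \emph{Construction}, \ref{cons}) is a recollection of the explicit construction rather than a proof: one fixes bases of $H^0(\Omega^1_{\overline X}(\log D))$ and of $H_1(X^0,\Zz)/\mathrm{torsion}$, assembles the period vectors $A_i,B_j$, and sets $G=\Cc^{\overline q}/(\sum\Zz A_i+\sum\Zz B_j)$ with $\alpha$ given by integration from a base point. Your coordinate-free formulation $G=V/\Lambda$ with $V=H^0(\Omega^1_{\overline X}(\log D))^\vee$ and $\Lambda=\mathrm{im}\,H_1(X,\Zz)$ is the same object, and your appeal to Deligne's mixed Hodge structure for the discreteness and rank of $\Lambda$ is precisely the justification underlying that construction in the cited references.

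Two places in your outline would need correction before it counts as a proof. First, algebraicity of $\alpha$ is not a matter of extending a \emph{bounded} holomorphic map across $D$: the integrals $\int_{x_0}^x\phi_j$ diverge logarithmically along $D$, and $\alpha$ does not extend to a morphism $\overline X\to G$. The argument actually used (and spelled out in Remark~\ref{rem:key1}) is to compactify $G\subset\Pp_A$ and check that $t_j=\exp(2\pi\ii\int\phi_j)$ extends to a meromorphic, hence by GAGA rational, section of $p^*L_j$; this yields a rational map $\overline X\dashrightarrow\Pp_A$ restricting to $\alpha$ on $X$. Second, uniqueness of the pair $(G,\alpha)$ in (2) is the formal Yoneda consequence of the universal property (1); what your generation claim (``$d\alpha$ spans $V^\vee$ generically'') really buys is uniqueness of $h$ in (1), and even there the cleaner phrasing is that two candidate $h$'s differ by a homomorphism $G\to G'$ vanishing on $\alpha(X)$, hence on the semi-abelian subvariety it generates, which is all of $G$ by construction of $\Lambda$.
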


We recall the constructions of quasi-Albanese morphisms in \cite{iitaka1976logarithmic,fujino2024quasi}. Let $(X,D_X=\sum_{i=1}^mD_i)$ be a projective log smooth pair with $D_X$ reduced. Where $D_i$'s are prime divisors. Let $X^0=X\setminus D_X$ and let $\iota:X^0\to X$ be the open embedding. Let $p:X\to A$ be the Albanese morphism. Set $q=q(X)$, $\overline{q}=q(X,D_X)$, and $d=\overline{q}-q$.

Let $\eta_i$, $1\leq i\leq d$, be a basis of the free part of $ \ker(\iota_*:H_1(X^0,\Zz)\to H_1(X,\Zz))$, and let $\xi_i\in H_1(X^0,\Zz)$, $1\leq i\leq 2q$, be such that the $\eta_i$ and $\xi_i$ form a basis of the free part of $H_1(X^0,\Zz)$. 

Let $\omega_i$, $1\leq i\leq q$, be a basis of $H^0(\Omega_X^1)$, and let $\phi_i\in H^0(\Omega_X^1(\log D))$, $1\leq i\leq d$, be such that the $\omega_i$ and $\phi_i$ form a basis of $H^0(\Omega_X^1(\log D_X))$, and satisfy $\int_{\eta_i}\omega_j=0$ and $\int_{\eta_i}\phi_j=\delta_{ij}$ (the Kronecker delta).

Set
\[A_i=(\int_{\xi_i}\omega_1,\cdots,\int_{\xi_i}\omega_q,\int_{\xi_i}\phi_1,\cdots,\int_{\xi_i}\phi_d)=:(\hat{A}_i,C_i), 1\leq i\leq 2q,\text{ and }\]

\[B_i=(\int_{\eta_i}\omega_1,\cdots,\int_{\eta_i}\omega_q,\int_{\eta_i}\phi_1,\cdots,\int_{\eta_i}\phi_d)=:(0,\hat{B}_i), 1\leq i\leq d.\]
The quasi-Albanese variety of $X^0$ is given by the quasi-abelian variety \[G:=\frac{H^0(X,\Omega_X^1(\log D))^{\vee}}{H_1(X^0,\Zz)}=\frac{\Cc^{\overline{q}}}{\sum_i\Zz A_i+\sum_j\Zz B_j}.\] We have the following Chevalley decomposition of $G$:
\[0\to {\Cc^d}/{\sum_j\Zz \hat{B}_j}\cong(\Cc^*)^d\to G\to {\Cc^q}/{\sum_i\Zz \hat{A}_i}\to0, \]
where ${\Cc^q}/{\sum_i\Zz \hat{A}_i}\cong A$, the Albanese variety of $X$, and the isomorphism $ \Cc^d/\sum_{j}\Zz\hat{B}_j\cong(\Cc^*)^d$ is given by \[(x_1,\cdots,x_d)\mapsto(\exp(2\pi\ii x_1),\cdots,(\exp(2\pi\ii x_d)).\] Fix a point $x_0\in X^0$. The quasi-Albanese morphism of $X^0$ is then given by 
\[X^0\to G, x\mapsto\left(\int_{x_0}^x\omega_1,\cdots,\int_{x_0}^x\omega_q,\int_{x_0}^x\phi_1,\cdots,\int_{x_0}^x\phi_d\right).\]  

Since $G$ is quasi-abelian, there are $d=\dim G-\dim A$ line bundles $L_j\in\text{Pic}^0(A)$ such that $G$ is the fiber product over $A$ of the total spaces of the $L_j$ with their zero sections removed (cf. \cite[Section 2]{Vojta1996Integral}). The following content in this subsection aims to explore these line bundles $L_j$. The explicit form of $L_j$, together with Lemma \ref{lem:ComputeOfq(X,D)}, helps us calculate examples.

Let $G'\to\Cc^q$ be the pullback of the $(\Cc^*)^d$-bundle $G\to A$ by the cover $\Cc^q\to A$. Then we have $G'=\Cc^{\overline{q}}/\sum_i\Zz B_i\cong \Cc^q\times (\Cc^*)^d$ and 
\[G={\Cc^q\times (\Cc^*)^d}/{\pi_1(A)},\] where $\pi_1(A)=\sum_i\Zz{\hat{A}_i}$ acts on $\Cc^q\times (\Cc^*)^d$ diagonally by the group homomorphism: \[\pi_1(A)\to\Aut((\Cc^*)^d),  \hat{A}_i\mapsto\left((t_1,\cdots,t_d)\mapsto(t_1\exp(2\pi \ii\int_{\xi_i}\phi_1),\cdots,t_d \exp(2\pi \ii\int_{\xi_i}\phi_d))\right),1\leq i\leq q.\] 
Hence $L_j$ is the line bundle corresponding to the representation 
\begin{align}\label{equ:repL_i}
\pi_1(A)\to\Cc^*, \hat{A}_i\mapsto \exp(2\pi \ii\int_{\xi_i}\phi_j),1\leq i\leq q.    
\end{align}

Let $a_{ij}=2\pi \ii\mathrm{Res}_{D_i}\phi_j$, $1\leq i\leq m$, $1\leq j\leq d$. For each $j$, we have the following.

\begin{enumerate}
\item $a_{ij}\in\Zz$ for all $i$.
\item $p^*L_j$ is given by the representation $\pi_1(X)\to\Cc^*,\gamma\mapsto \exp(2\pi \ii\int_{\gamma}\phi_j)$.
\item $t_j(x):=\exp(2\pi \ii\int_{x_0}^x\phi_j)$ is a rational section of $p^*L_j$ with $\mathrm{div}(t_j)=\sum_ia_{ij}D_i$. In particular, $p^*L_j\cong O_X(\sum_ia_{ij}D_i)$.
\end{enumerate} 

\begin{proof}
For each $i$, let $\delta_i$ be a small loop around $D_i$. 

(1) Since $\delta_i\in\ker(H_1(X^0,\Zz)\to H_1(X,\Zz))$, we have $\delta_i=\sum_kn_k\eta_k+t$ for some integers $n_k$ and a torsion element $t$. By the residue theorem, we have \[2\pi \ii\mathrm{Res}_{D_i}\phi_j=\int_{\delta_i}\phi_j=\sum_kn_k\int_{\eta_k}\phi_j+\int_t\phi_j=\sum_kn_k\delta_{kj}=n_j\in\Zz.\]

(2)  By the expression $\exp(2\pi \ii\int_{\gamma}\phi_j)$, we mean $\exp(2\pi \ii\int_{\gamma'}\phi_j)$, where $\gamma'$ is any other loop in $X^0$ which is homotopic to $\gamma$. Since $\pi_1(X^0)\to\pi_1(X)$ is surjective with kernel generated by $\delta_i$'s, $\exp(2\pi \ii\int_{\delta_i}\phi_j)=1$ for all $i$, and $\phi_j$ is $d$-closed, the expression $\exp(2\pi \ii\int_{\gamma}\phi_j)$ is well-defined. Hence, we have (2) by the representation (\ref{equ:repL_i}).

(3)  Since $t_j(\gamma x)=\exp(2\pi \ii\int_{\gamma}\phi_j)t_j(x)$ for all $\gamma\in\pi_1(X)$, we have $t_j$ is a holomorphic nowhere vanishing section of $p^*L_j$ over $X^0$. Let $\{x_i\}$ be a coordinate system on an analytic open subset $U$ of $X$ such that $D=(x_1x_2\cdots x_c=0)$. Then \[\phi_j|_U=\sum_{i=1}^c\frac{a_{ij}}{2\pi \ii}\frac{dx_i}{x_i}+\text{some holomorphic function},\text{ and }\]
\[t_j|_U=\prod_{i=1}^cx_i^{a_{ij}} \times\text{some holomorphic function without zeros} .\] Hence $t_j$ is a meromorphic, and so a rational (by Serre's GAGA) section of $p^*L_j$ and $\mathrm{div}(t_j)=\sum_ia_{ij}D_i$.
\end{proof}

Let $(\Pp_A,H)$ be the canonical compactification of $G$. Let $(z_1,\cdots,z_d)$ be the inhomogeneous fiber coordinates of the $(\Pp^1)^d$-bundle $\Pp_A\to A$, the rational map $X\dashrightarrow \Pp_A$ over $A$ is now given by \[x\mapsto (z_1,\cdots,z_d)=(t_1,\cdots,t_d)\in\Pp_A.\]

\begin{exa}\label{example}
Let $(X,D_X)$ be a log smooth pair with $D_X$ reduced.

(1) Let $X=\Pp^2$. If $D_X=$ 3 lines, then $q(X,D_X)=3$, and $X\setminus D_X=(\Cc^*)^2$ itself is a quasi-abelian variety. If $D_X=$ an elliptic curve, then $q(X,D_X)=0$. If $D_X=Q+L$, where $L=(l=0)$ is a line and $Q=(q=0)$ is a conic, then $q(X,D_X)=1$, $H^0(\Omega^1_{X}(\log D_X))=\Cc\cdot d\log\frac{l^2}{q}$ and the quasi-Albanese morphism is $X\setminus D\to\Cc^*, x\mapsto \frac{l^2}{q}$. The rational map $X\dashrightarrow\Pp^1$, $x\mapsto [l^2:q]$ is not defined at $Q\cap L$.

(2) Let $X=\Pp^1_{z_1}\times\Pp^1_{z_2}$ and $D_X=F_1+F_2+\Delta$ where $F_1=(z_1=0)$, $F_2=(z_2=0)$, $\Delta=(z_1=z_2)$. Then $H^0(\Omega_X^1(\log D_X))=\Cc\cdot d\log(1/z_1-1/z_2)$ and the quasi-Albanese morphism is $X\setminus D\to\Cc^*, (z_1,z_2)\mapsto 1/z_1-1/z_2$. The rational map $X\dashrightarrow\Pp^1$, $(z_1,z_2)\mapsto 1/z_1-1/z_2$ is not defined at $(F_1+F_2)\cap\Delta=(z_1=z_2=0)$.

(3) Let $A$ be an abelian variety and let $L$ be a line bundle on $A$. Let $X=\Pp_A(O_A\oplus L)$, let $z$ be its inhomogeneous fiber coordinate, and let $H_0=(z=0)$, $H_{\infty}=(z=\infty)$, $D_X=H_0+H_{\infty}$. Then $q(X)=\dim A$.
 
If $L\in\text{Pic}^0(A)$, then $q(X,D_X)=\dim A+1$, and $\Pp_A\setminus H_1+H_2$ itself is a quasi-abelian variety. 

If $L\not\in\text{Pic}^0(A)$, then $q(X,D_X)=\dim A$, and the quasi-Albanese morphism is the natural projection $\Pp_A\setminus H_1+H_2\to A$.

\end{exa}

\begin{lem}\label{lem:DX<DX'}
Let $X$ be a smooth projective variety. Let $D_X\leq D_X'$ be reduced snc divisors. By the universal property for quasi-Albanese morphisms, we have the following commutative diagram.

\begin{center}
\begin{tikzcd}
X\setminus D_X' \arrow[r] \arrow[d, hook] & G' \arrow[d] \\
X\setminus D_X \arrow[r]  & G           
\end{tikzcd} 
\end{center}
where the horizontal arrows are quasi-Albanese morphisms and $G'\to G$ is the induced morphism. Assume that $q(X,D_X)=q(X,D_X')$. Then $G'\to G$ is an isomorphism.
\end{lem}
\begin{proof}
Since the injective homomorphism \[H^0(X,\Omega_X^1(\log D_X))\to H^0(X,\Omega_X^1(\log D_X'))\] is an isomorphism, we have $H^1(X\setminus D_X,\Cc)\to H^1(X\setminus D_X',\Cc)$ is an isomorphism by Deligne's mixed Hodge structures:
\begin{center}
\begin{tikzcd}
{H^1(X\setminus D_X,\Cc)} \arrow[r] \arrow[d, "\cong"]    & {H^1(X\setminus D_X',\Cc)} \arrow[d, "\cong"] \\
H^1(X,O_X)\oplus H^0(X,\Omega_X^1(\log D_X)) \arrow[r, "\cong"] & H^1(X,O_X)\oplus H^0(X,\Omega_X^1(\log D_X'))    
\end{tikzcd}    
\end{center}
Hence \[G'=\frac{H^0(X,\Omega_X^1(\log D_X'))^{\vee}}{H_1(Z\setminus D_X',\Zz)}\to G=\frac{H^0(X,\Omega_X^1(\log D_X))^{\vee}}{H_1(Z\setminus D_X,\Zz)}\] is an isomorphism. 
\end{proof}

\subsection{Quasi-Albanese morphisms for klt varieties}\label{rem:Quasi-albanSing}

Let $X$ be a (possibly) singular variety. Let $\pi:X'\to X$ be a resolution and let $f':X'\to G$ be the quasi-Albanese morphism of $X'$, then the quasi-Albanese map of $X$ is defined as the rational map $f'\circ \pi^{-1}$. Assume that $X$ has klt singularities. Then the fibers of $\pi$ are rationally chain connected by Hacon-McKernan \cite{hacon2007shokurov}. Since $G$ contains no rational curves, $f'$ factors through $\pi$ by the rigidity Lemma (cf. \cite[5.3 Proposition]{KollarRationalCurves}). In other words, there is a morphism $f:X\to G$ such that $f\circ \pi=f'$. 
\begin{center}
\begin{tikzcd}
X' \arrow[r, "f'"] \arrow[d, "\pi"'] & G \\
X \arrow[ru, "\exists f"']           &  
\end{tikzcd}    
\end{center}
One checks that the morphism $f:X\to G$ given in this way satisfies the universal property as in Theorem \ref{thm:Iikata'quasiAlb}.

Let $(X,D_X)$ be a projective lc pair and let $\nklt=\nklt(X,D_X)$ be its non-klt locus. Let $\alpha:X\setminus\nklt\to G$ be the quasi-Albanese morphism. Let $A$ be the compact part of $G$ and let $A':=(\text{Pic}^0(X))^{\vee}_{\text{red}}$ be the Albanese variety of $X$. Let $\pi:Z\to X$ be a log resolution such that $\pi^{-1}\nklt$ is a divisor. By our definition, $G$ is the quasi-Albanese variety of $Z\setminus\pi^{-1}\nklt$. Hence $A$ is the Albanese variety of $Z$ by Iitaka's construction. By the universal property for Albanese morphisms, we have the following commutative diagram.

\begin{center}
\begin{tikzcd}
Z \arrow[d] \arrow[r] & A \arrow[d] \\
X \arrow[r]           & A'         
\end{tikzcd}     
\end{center}
where $A\to A'$ is the morphism induced by $Z\to X$. Unlike the log smooth case, in general, we have $A\not\cong A'$. But if $(X,D_X)$ is dlt, then by \cite{hacon2007shokurov} and the rigidity Lemma, we do have $A\cong A'$.

\begin{lem}\label{lem:LcToLogSmooth}Let $(X,D_X)$ be a projective lc pair. Let $G$ be the quasi-Albanese variety of $X\setminus\nklt(X,D_X)$. Let $\pi:Z\to X$ be a log resolution such that $D_Z':=\pi^{-1}\nklt(X,D_X)$ is a divisor. Let $D_Z=\pi^{-1}_*D_X+\Ex(\pi)$. Note that $D_Z'\leq \lf D_Z\rf$. By the universal property for quasi-Albanese morphisms, we have the following commutative diagram.

\begin{center}
\begin{tikzcd}
Z\setminus \lf D_Z\rf \arrow[r] \arrow[d,hook] & G' \arrow[d] \\
Z\setminus D_Z' \arrow[r]  & G           
\end{tikzcd} 
\end{center}
where the horizontal arrows are quasi-Albanese morphisms and $G'\to G$ is the induced morphism. Then $G'\to G$ is an isomorphism. In particular, we have $\dim G=q(X,D_X)$.
\end{lem}

\begin{proof}
By Lemma \ref{lem:compare}, we have $q(X,D_X)=q(Z,D_Z')=q(Z,D_Z)$. Hence the Lemma follows from Lemma \ref{lem:DX<DX'}.
\end{proof}

\subsection{Subadditivity and positivity}

The following is a summary of slightly generalizations of the relevant theorems due to Kawamata \cite{kawamata1981characterization}, Maehara \cite{maehara1986weak}, Campana \cite{Campana2004Orbifolds} and Fujino \cite{Fujino2017notes}, and it is one of the most important techniques used in this note. 

\begin{thm}\label{thm:subadditivity}
Let $f:X\to Y$ be a fibration between smooth projective varieties. Let $D_X$ and $D_Y$ be  divisors on $X$ and $Y$ respectively, such that $(X,D_X)$ and $(Y,D_Y)$ are log smooth lc. Assume that $D_Y$ is reduced. Denote by $F$ a very general fiber of $f$. Assume that \[f^{-1}D_Y\subset \lf D_X\rf.\] Then the following holds.

(1)(cf. \cite[Theorem 30]{kawamata1981characterization}, \cite[Corollary 1, Corollary 2]{maehara1986weak}\label{thm:maehara}, \cite[Theorem 4.2]{Campana2004Orbifolds} or \cite[Theorem 1.9]{Fujino2017notes}) Let $B$ be a big $\Qq$-divisor on $Y$, then \[\kappa(K_{X}+D_X-f^*(K_Y+D_Y)+f^*B)=\kappa(K_F+{D_X}|_F)+\dim Y.\] 
In particular, if $K_Y+D_Y$ is big, then \[\kappa(K_X+D_X)=\kappa(K_F+D_X|_F)+\dim Y.\]

(2) If $\kappa(K_F+{D_X}|_F)\geq0$, then $K_{X}+D_X-f^*(K_Y+D_Y)$ is pseudo-effective.

(3) (\cite{fujino2017subadditivity,Fujino2017Corrigendum})$\nu(K_X+D_X)\geq\nu(K_F+D_X|_F)+\kappa(K_Y+D_Y)$. 

\end{thm}

\begin{proof}
Note that we do not require $D_X$ to be reduced here. Using \cite[Theorem 1.1]{Fujino2017notes}, the proof of (1) is identical to that of \cite[Theorem 1.9]{Fujino2017notes}. Since the $\Qq$-divisor $B$ in (1) can be sufficiently small, (2) follows.    
\end{proof}

Let $(X^0,D^0)$ be a quasi-projective log smooth lc pair, and let $X^0\subset X$ be a compactification such that $(X,D+B)$ is log smooth, where $B=X\setminus X^0$ and $D$ is the closure of $D^0$ in $X$. We define the logarithmic Kodaira dimension $\overline{\kappa}$ (resp. numerical logarithmic Kodaira dimension $\overline{\nu}$) of $(X^0,D^0)$ as $\overline{\kappa}(X^0,D^0):=\kappa(K_{X}+D+B)$ (resp. $\overline{\nu}(X^0,D^0):=\nu(K_{X}+D+B)$).  To show that $\overline{\kappa}$ and $\overline{\nu}$ are well defined, it suffices to show that any two such compactifications give the same $\overline{\kappa}$ and $\overline{\nu}$. Let $(X,D+B)$ and $(X',D'+B')$ be two such compactifications. Let $X\xleftarrow{p}X''\xrightarrow{q}X'$ be a common resolution, which is an isomorphism over $X^0$, and such that $D''+B''$ is snc, where $B''=X''\setminus X^0$ and $D''$ is the closure of $D^0$ in $X''$. Then $D''=p^{-1}_*D$ and $B''=p^{-1}_*B+\Ex(p)$. Hence \[K_{X''}+D''+B''=p^*(K_X+D+B)+\sum_{E\subset \Ex(p)}(a(E,X,D+B)+1)E.\] 
Since $(X,D+B)$ is lc, we have $\sum_{E\subset \Ex(p)}(a(E,X,D+B)+1)E\geq0$. Hence $\kappa(K_X+D+B)=\kappa(K_{X''}+D''+B'')=\kappa(K_{X'}+D'+B')$ and $\nu(K_X+D+B)=\nu(K_{X''}+D''+B'')=\nu(K_{X'}+D'+B')$. Using these definitions, we can restate Theorem \ref{thm:subadditivity} (1) (3) as follows.

\begin{lem}\label{lem:subadditivity2}
Let $f^0:X^0\to Y^0$ be a fibration between smooth quasi-projective varieties. Let $D^0$ be a divisor on $X^0$ such that $(X^0,D^0)$ is log smooth lc. Denote by $F^0$ a very general fiber of $f^0$. Then the following holds.\begin{enumerate}
\item If $\overline{\kappa}(Y^0)=\dim Y^0$, then $\overline{\kappa}(X^0,D^0)= \overline{\kappa}(F^0,D^0|_{F^0})+\dim Y^0$.
     
\item $\overline{\nu}(X^0,D^0)\geq \overline{\nu}(F^0,D^0|_{F^0})+\overline{\kappa}(Y^0)$.  
 
\end{enumerate} 
\end{lem}
\begin{proof}
Let $X^0\subset X$ and $Y^0\subset Y$ be compactifications, such that $(X,D+B_X)$, $(Y,B_Y)$ are log smooth and there is a morphism $f:X\to Y$ extending $f^0$, where $D$ is the closure of $D^0$ in $X$, $B_X=X\setminus X^0$ and $B_Y=Y\setminus Y^0$. By construction, we have $f^{-1}B_Y\subset B_X\subset \lf D+B_X\rf$. Then (1), (2) follow from Theorem \ref{thm:subadditivity} (1), (3) respectively.
\end{proof}

\section{Proof of the main theorems}

\subsection{Proof of theorem \ref{thm:KawFujSing}}

\begin{proof}[Proof of theorem \ref{thm:KawFujSing}] 
This is (3) of the following Theorem \ref{lem:KawFujLogSmooth}. \end{proof}

\begin{thm}\label{lem:KawFujLogSmooth}
\begin{enumerate}
\item Let $(X^0,D_X^0)$ be a quasi-projective log smooth lc pair with $\overline{\kappa}(X^0,D_X^0)=0$. Then the quasi-Albanese morphism of $X^0$ is a fibration. 

\item Let $(X,D_X)$ be a projective $\Qq$-factorial dlt pair with $\kappa(K_X+D_X)=0$. Let $0\leq\Delta_X\leq \lf D_X\rf$ be a reduced divisor. Then the quasi-Albanese morphism of $X\setminus \Delta_X$ is a fibration.

\item Let $(X,D_X)$ be a projective lc pair with $\kappa(K_X+D_X)=0$. Then the quasi-Albanese morphism of $X\setminus \nklt(X,D_X)$ is a fibration.

\end{enumerate}
\end{thm}
\begin{proof}
(1) The proof is identical to that of \cite[Theorem 1.2]{fujino2024quasi} by using Lemma \ref{lem:subadditivity2} (1) instead of \cite[Theorem 6.1]{fujino2024quasi}.

(2) Let $\pi:Z\to X$ be a log resolution and let $D_Z=\pi^{-1}_*D_X+\Ex(\pi)$. Since  $\pi^{-1}\Delta_X\leq \lf D_Z\rf$, we have $\overline{\kappa}(Z\setminus \pi^{-1}\Delta_X,D_Z-\pi^{-1}\Delta_X)=\kappa(K_Z+D_Z)=\kappa(K_X+D_X)=0$. By (1), the quasi-Albanese morphism of $Z\setminus \pi^{-1}\Delta_X$ is a fibration. Hence we have (2).

(3) Let $\pi:Z\to X$ be a log resolution such that $\pi^{-1}\nklt(X,D_X)$ is a divisor. Let $D_Z=\pi^{-1}_*D_X+\Ex(\pi)$. Since  $\pi^{-1}\nklt\leq \lf D_Z\rf$, we have $\overline{\kappa}(Z\setminus \pi^{-1}\nklt,D_Z-\pi^{-1}\nklt)=\kappa(K_Z+D_Z)=\kappa(K_X+D_X)=0$. By (1), the quasi-Albanese morphism of $Z\setminus \pi^{-1}\nklt$ is a fibration. Hence we have (3).
\end{proof}

\subsection{Proof of theorem \ref{thm2}}

In the following theorem, we do not assume that $G$ is the quasi-Albanese variety of $Z\setminus\lf D_Z\rf$ and we do not assume that $G\neq A$ (in particular, $(\Pp_A,H)$ may coincides with $(A,0)$). 

\begin{thm}\label{thm:key3}
Let $(Z,D_Z)$ be a projective log smooth lc pair with $\kappa(K_Z+D_Z)=0$. 
Let $G$ be a quasi-abelian variety and let $A$ be its compact part. Let $(\Pp_A,H)$ be the canonical compactification of $G$. Note that $(\Pp_A,H)$ is log smooth and $K_{\Pp_A}+H\sim0$.

Assume that there are
\begin{itemize}
\item a fibration $f:Z\to \Pp_A$ with 
 $f^{-1}H\subset \lf D_Z\rf$, 
 \item a projective lc pair $(X,D_X)$ with $K_X+D_X\sim_{\Qq}0$, and
 \item a birational morphism $\pi:Z\to X$ such that $D_Z=\pi^{-1}_*D_X+\Ex(\pi)$. Note that since $(X,D_X)$ is lc, we have \[K_Z+D_Z=\pi^*(K_X+D_X)+E_Z\] for some effective and exceptional over $X$ divisor $E_Z$.
 \end{itemize} 

Then the following holds.

\begin{enumerate}

\item\label{(1)}
Let $Q\subset Z$ be a prime divisor which is exceptional over $\Pp_A$. Then $Q\subset E_Z+f^{-1}H$.

\item\label{VeryExceoOverG} Let $P\subset\Pp_A$ be a prime divisor with $P\not\subset H$, then there is a prime divisor $Q\subset Z$ with $Q\not\subset E_Z$ dominating $P$.

\item\label{AssumeVeryExcepOverH} Assume that for any prime divisor $P\subset H$, there is a prime divisor $Q\not\subset E_Z$ dominating $P$, then
$(Z,D_Z)$ has a good minimal model $(W,D_W)$ over $\Pp_A$ with $K_W+D_W\sim_{\Qq}0$.

\item\label{assumeMMP} Assume that $(Z,D_Z)$ has a good minimal model $(W,D_W)$ over $\Pp_A$ with $K_W+D_W\sim_{\Qq}0$. Let $g:W\to\Pp_A$ be the induced fibration. Then the canonical bundle formula for $(W,D_W)\to\Pp_A$ has the form \[K_W+D_W\sim_{\Qq}g^*(K_{\Pp_A}+H),\] the restriction $(W,D_W)|_{g^{-1}G}\to G$ is locally stable, and $D_W^v=\lf D_W^v\rf=g^{-1}H$. The induced birational map $\mu:(W,D_W)\dashrightarrow (X,D_X)$ is a crepant birational contraction with $D_W=\mu^{-1}_*D_X+\Ex(\mu)$.

\item\label{indeedMMP} $(Z,D_Z)$ has a good minimal model $(W,D_W)$ over $\Pp_A$ with $K_W+D_W\sim_{\Qq}0$.
\end{enumerate}
\end{thm}
\begin{proof}
Let $F$ be a very general fiber of $f$. By Fujino's subadditivity Theorem \ref{thm:subadditivity}(3), we have \[0=\nu(K_Z+D_Z)\geq\nu(K_F+D_Z|_F)+\kappa(K_{\Pp_A}+H).\] Hence $\nu(K_F+D_Z|_F)=0$. By Gongyo's abundance theorem \cite{gongyo2011minimal}, we have $\kappa(K_F+D_Z|_F)=0$ and $(F,D_Z|_F)$ has a good minimal model.

(1) Assume that $Q\not\subset f^{-1}H$. Let $q:\Pp'_A\to\Pp_A$ be a composition of the blowup along $f(Q)$ with a log resolution. We may assume that $q$ is an isomorphism over $\Pp_A\setminus f(Q)$. Let $p:Z'\to Z$ be a log resolution of $(Z,D_Z)$ resolving the indeterminacies of $Z\dashrightarrow \Pp_A'$. We have the following commutative diagram.

\begin{center}
\begin{tikzcd}
Z' \arrow[d, "f'"] \arrow[r, "p"] & Z \arrow[d, "f"] \arrow[r, "\pi"] & X \\
\Pp_A' \arrow[r, "q"]                 & \Pp_A                                 &  
\end{tikzcd}    
\end{center}

Write $K_{Z'}+D_{Z'}=p^*(K_Z+D_Z)+E_1$,  $K_{\Pp'_A}+H'=q^*(K_{\Pp_A}+H)+E_2$, where $H'=q^{-1}H$, $D_{Z'}=p^{-1}_*D_Z+\Ex(p)$, and $E_1$, $E_2$ are effective and exceptional divisors. By $f^{-1}H\subset \lf D_Z\rf$, we have $f'^{-1}H'\subset \lf D_{Z'}\rf$. By \[0\sim_{\Qq}p^*(K_{Z}+D_Z-f^*(K_{\Pp_A}+H)-E_Z)=K_{Z'}+D_{Z'}-f'^*(K_{\Pp'_A}+H')-E_1+f'^*E_2-p^*E_Z,\] $\kappa(K_F+D_Z|_F)=0$, and Theorem \ref{thm:subadditivity}(2), we have $E_1-f'^*E_2+p^*E_Z$ is pseudo-effective. Since $(\pi\circ p)_*(E_1-f'^*E_2+p^*E_Z)=-(\pi\circ p)_*f'^*E_2$ is pseudo-effective, we have $f'^*E_2$ is exceptional over $X$. Hence $E_1-f'^*E_2+p^*E_Z$ is effective by a Lemma due to Lazarsfeld (cf. \cite[Corollary 13]{kollar2010quotients}).

Let $Q'$ be the strict transform of $Q$ on $Z'$ and let $P'=f'(Q')$. Since $q(P')=f(Q)\not\subset H$, we have $\mult_{P'}E_2>0$, so $\mult_{Q'}f'^*E_2>0$. By \[0\leq\mult_{Q'}(E_1-f'^*E_2+p^*E_Z)=\mult_{Q'}(-f'^*E_2+p^*E_Z),\] we have $Q'\subset p^*E_Z$. Hence $Q\subset E_Z$.

(2) Assume the contrary, i.e., assume that \begin{align*}
f^{-1}P\subset E_Z+E    
\end{align*}
for some effective exceptional over $\Pp_A$ divisor $E$. Then by (\ref{(1)}), we have \[f^{-1}(P+H)\subset E_Z+f^{-1}H\subset \lf D_Z\rf.\] Consider the fibration $Z\setminus\lf D_Z\rf\to \Pp_A\setminus P+H$. By Lemma \ref{lem:subadditivity2}(2) and Lemma \ref{lem:G'sKappa}, we have \[0=\nu(K_Z+D_Z)=\overline{\nu}(Z\setminus \lf D_Z\rf,D_Z)\geq \overline{\nu}(F\setminus \lf D_Z\rf, D_Z)+\overline{\kappa}({\Pp_A}\setminus P+H)=\overline{\kappa}(G\setminus P)>0,\] which is a contradiction.

(3) By (\ref{VeryExceoOverG}) and the assumption in (\ref{AssumeVeryExcepOverH}), we known that the vertical part $E_Z^v$ of $E_Z$ is degenerate over $\Pp_A$. Since $\kappa(E_Z|_F)=\kappa(K_F+D_Z|_F)=0$, we have \begin{align}\label{equ:reflexiv}
O_{\Pp_A}\subset f_*O_Z(mE_Z)\subset (f_*O_Z(mE_Z))^{\wedge}=O_{\Pp_A}
\end{align} for all sufficiently divisible $m>0$, where $\wedge$ means taking reflexive hull. In particular, \[\bigoplus_{m\geq0}f_*O_Z(m(K_Z+D_Z))=\bigoplus_{m\geq0}f_*O_Z(mE_Z)\] is a finitely generated $O_{\Pp_A}$-algebra. By Gongyo's abundance theorem and Hacon-Xu \cite[Theorem 2.12]{hacon2013existence}, $(Z,D_Z)$ has a good minimal model $(W,D_W)$ over $\Pp_A$. Denote by $g:W\to\Pp_A$ the induced fibration. Let $m>0$ be a sufficiently divisible integer. Then there is a surjective morphism \[g^*g_*O_{W}(m(K_{W}+D_W))\to O_W(m(K_{W}+D_W)).\] By formula (\ref{equ:reflexiv}), we have \[g_*O_{W}(m(K_{W}+D_W))=f_*O_{Z}(m(K_{Z}+D_Z))=O_{\Pp_A}.\] Hence  $K_W+D_W\sim_{\Qq}0$.

(4) Write the canonical bundle formula for $g:(W,D_W)\to \Pp_A$ as $K_W+D_W\sim_{\Qq}g^*(K_{\Pp_A}+B_{\Pp_A}+M_{\Pp_A})$. Since $f^{-1}H\subset \lf D_Z\rf$, we have $g^{-1}H\subset \lf D_W\rf$. Hence $B_{\Pp_A}\geq H$. Since $K_W+D_W\sim_{\Qq}0$ and $K_{\Pp_A}+H\sim0$, we have $B_{\Pp_A}=H$ and $M_{\Pp_A}\sim_{\Qq}0$. Hence the canonical bundle formula writes \begin{align}\label{equ:CBF2}
K_{W}+D_W\sim_{\Qq}g^*(K_{\Pp_A}+H).    
\end{align}
Let $\Sigma$ be a reduced snc divisor on $G$ and let $\overline{\Sigma}$ be its closure in $\Pp_A$. Then the canonical bundle formula for $(W,D_W+g^*\overline{\Sigma})\to \Pp_A$ writes $K_W+D_W+g^*\overline{\Sigma}\sim_{\Qq}g^*(K_{\Pp_A}+H+\overline{\Sigma})$. By inversion of adjunction (cf. \cite[Theorem 3.1]{Ambro2004Shokurov} or \cite[Proposition 4.16]{filipazzi2018generalized}), $(W,D_W+g^*\overline{\Sigma})|_{g^{-1}G}$ is lc. Hence the restriction $(W,D_W)|_{g^{-1}G}\to G$ is locally stable by \cite[Corollary 4.55]{Kollar-Families}. In particular, by \cite[Definition-Theorem 4.7]{Kollar-Families}, the restriction $g|_{g^{-1}G}$ is flat.

Let $Q\subset D_W^v$ be a prime divisor. If $g(Q)\not\subset H$, then $P:=g(Q)$ is a divisor on $\Pp_A$. Since $(W,D_W+g^*P)$ is lc near the generic point of $P$, we have \[-1\leq a(Q,W,D_W+g^*P)=-\mult_Q(D_W+g^*P)=-1-\mult_QD_W<-1,\] which is a contradiction. Hence $D_W^v\subset g^{-1}H\subset \lf D_W^v\rf$.

Since the birational map $Z\dashrightarrow W$ contracts precisely the divisor $E_Z\subset\mathrm{Ex}(\pi)$, we have 
$W\dashrightarrow X$ is a birational contraction and $D_W=\mu^{-1}_*D_X+\Ex(\mu)$. Let $X\xleftarrow{u}V\xrightarrow{v}W$ be a common log resolution of $(X,D_X)$ and $(W,D_W)$. Then $u^*(K_X+D_X)=v^*(K_W+D_W)$ by the Negativity Lemma (cf. \cite[3.39]{KM98}), i.e., $\mu:(W,D_W)\dashrightarrow (X,D_X)$ is crepant.

(5) Case 1. $\dim\Pp_A-\dim A=0$. By Gongyo's abundance theorem, $(Z,D_Z)$ has a good minimal model $(W,D_W)$. Let $Z\xleftarrow{u}V\xrightarrow{v}W$ be a common resolution. By \cite{hacon2007shokurov}, the rigidity Lemma, and the universal property for Albanese morphisms, we have the following commutative diagram.
\begin{center}
\begin{tikzcd}
V \arrow[r, "u"] \arrow[rd, "v"'] & Z \arrow[d, dashed] \arrow[r, "p_Z"] & A_Z \arrow[d, "\cong"'] \arrow[r] & A \\
& W \arrow[r, "p_W"]   & A_W \arrow[ru, "\exists"']         &  
\end{tikzcd}    
\end{center}
where $p_Z$, $p_W$, $p_Z\circ u$, $p_W\circ v$ are Albanese morphisms, $A_Z\to A$ is the induced morphism by the universal property of $p_Z$. Hence we may identify the Albanese varieties of $W$ and $Z$, and there is a morphism $W\to A$, compatible with the rational map $Z\dashrightarrow W$. Hence $(W,D_W)$ is actually a good minimal model of $(Z,D_Z)$ over $A$.

Case 2. $\dim\Pp_A-\dim A>0$. By (\ref{AssumeVeryExcepOverH}),to prove (\ref{indeedMMP}), it suffices to show that for any prime divisor $P\subset H$, there is a prime divisor $Q\not\subset E_Z$ dominating $P$. Assume the contrary, i.e., assume that there is a prime divisor $P\subset H$, and an effective exceptional over $\Pp_A$ divisor $E$ with $f(E)\subset P$ such that \begin{align}\label{equ:very-excep}
f^{-1}P\subset E_Z+E.
\end{align}

Let $L_i\in\text{Pic}^0(A)$, $1\leq i\leq d$, be line bundles such that $G$ is the fiber product over $A$ of the total spaces of $L_i$'s with their zero sections removed. Then
$\Pp_A=\Pp_A(O_A\oplus L_1^{-1})\times_A\cdots\times_A\Pp_A(O_A\oplus L_d^{-1})$. Let $(z_1,\cdots,z_d)$ be the inhomogeneous fiber coordinates of $\Pp_A$. For each $i$, let $p_i:\Pp_A\to \Pp_A(O_A\oplus L_i^{-1})$ be the projection, and let \[H_i=(z_i=0)+(z_i=\infty)\subset \Pp_A(O_A\oplus L_i^{-1})\] be the effective and ample over $A$ divisor. Then $H=\sum_{i=1}^dp_i^*H_i$.

Since $P\subset H=\sum_{i=1}^dp_i^*H_i$, we have $P\subset p_i^*H_i $ for some $i$. By re-indexing $i$'s, we may assume that $P\subset p_1^*H_1$. Let \[\Pp'_A=\Pp_A(O_A\oplus L_2^{-1})\times_A\cdots\times_A\Pp_A(O_A\oplus L_d^{-1}),\] and let $p_i':\Pp_A'\to \Pp_A(O_A\oplus L_i^{-1})$, $2\leq i\leq d$, $\alpha:\Pp_A\to\Pp_A'$, $p':\Pp_A'\to A$ be the natural projections. Let $H'=\sum_{i=2}^d {p'}_i^*H_i$. Then  $\Pp_A'\setminus H'$ is a quasi-abelian variety whose canonical compactification is just $(\Pp_A',H')$, and $\alpha\circ f:Z\to \Pp_A'$ is a fibration with \[(\alpha\circ f)^{-1}H'\subset f^{-1}H\subset \lf D_Z\rf.\] By induction on $(\dim\Pp_A-\dim A)$, $(Z,D_Z)$ has a good minimal model $(V,D_{V})$ over $\Pp_A'$ with $K_{V}+D_{V}\sim_{\Qq}0$. We have the following commutative diagram.

\begin{center}
\begin{tikzcd}
Z \arrow[d, dashed] \arrow[r, "f"] & \Pp_A \arrow[d, "\alpha"] \\
V \arrow[r, "g"]   & \Pp_A' \arrow[d, "p'"]     \\
& A  
\end{tikzcd}
\end{center}

We claim that \begin{align}\label{claim2}
f^{-1}P\subset E_Z+(\alpha\circ f)^{-1}H'.    
\end{align}
Let $Q\subset E$ be a prime divisor which is not contained in $E_Z$. Since $f(E)\subset P\subset H$, we have $Q\subset f^{-1}H\subset \lf D_Z\rf $. Since the MMP $Z\dashrightarrow V$ contracts precisely the divisor $E_Z$, the strict transform $Q_V$ of $Q$ on $V$ is a divisor contained in $\lf D_V\rf$. Since $\dim\Pp_A-\dim\Pp_A'=1$, and $Q$ is exceptional over $\Pp_A$, we have $Q$ is vertical over $\Pp_A'$. Hence $Q_V$ is vertical over $\Pp_A'$. By (\ref{assumeMMP}), we have $Q_V\subset g^{-1}H'$. Hence $Q\subset (\alpha\circ f)^{-1}H'$ and the claim follows.

Let $L_A$ be an effective and sufficiently ample divisor on $A$. Since $H'$ and $\alpha^*H'+P$ are ample over $A$, we have 
\begin{align*}
\dim\Pp_A&=\kappa(P+\alpha^*H'+(p'\circ \alpha)^*L_A)
\\&=\kappa(f^*P+(\alpha\circ f)^*H'+(p'\circ \alpha\circ f)^*L_A)
\\&\leq \kappa(E_Z+(\alpha\circ f)^*(H'+p'^*L_A)) \text{ (by inclusion (\ref{claim2}))}
\\&=\kappa(H'+p'^*L_A) \text{ (by Lemma \ref{lem:covering})}
\\&=\dim\Pp_A',    
\end{align*} which is a contradiction. Hence the inclusion (\ref{equ:very-excep}) can not be true.
\end{proof}\label{lem:G'sKappa}

\begin{lem}[{cf. \cite[Proposition 1.3]{Deng2024HyperFund}}]
Let $G$ be a quasi-abelian variety and let $P\subset G$ be a prime divisor, then $\overline{\kappa}(G\setminus P)>0$. 
\end{lem}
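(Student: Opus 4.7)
The plan is to use the compactification $\Pp_A$ of $G$ from Remark~\ref{rem:key1}, which is log smooth with boundary $H=\Pp_A\setminus G$ and satisfies $K_{\Pp_A}+H\sim 0$. Let $p:\Pp_A\to A$ denote the projection and let $\bar P$ be the closure of $P$ in $\Pp_A$. Take a log resolution $\pi:Y\to\Pp_A$ of $(\Pp_A,H+\bar P)$ with reduced boundary $D_Y$ such that $Y\setminus D_Y=G\setminus P$. Because $(\Pp_A,H)$ is already log smooth, every $\pi$-exceptional divisor has log discrepancy $\geq 1$ with respect to $(\Pp_A,H)$, and this together with $K_{\Pp_A}+H\sim 0$ gives $K_Y+D_Y=\tilde P+E'$, where $\tilde P=\pi^{-1}_*\bar P$ and $E'$ is effective and $\pi$-exceptional with every coefficient $\geq 1$. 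Taking $m$ larger than all multiplicities of $\bar P$ along $\pi$-exceptional divisors yields $m(K_Y+D_Y)\geq \pi^*\bar P$, hence $\overline{\kappa}(G\setminus P)=\kappa(Y,K_Y+D_Y)\geq\kappa(\Pp_A,\bar P)$. It therefore suffices to prove $\kappa(\Pp_A,\bar P)\geq 1$, which I split into two cases.

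First, suppose $P$ does not dominate $A$. Since $G\to A$ is a $(\Cc^*)^d$-bundle and $P$ is a prime divisor, $P=p^{-1}(P_A)$ for a prime divisor $P_A\subset A$, and consequently $\bar P=p^*P_A$. Because $p_*O_{\Pp_A}=O_A$, we have $\kappa(\Pp_A,\bar P)=\kappa(A,P_A)$. But $P_A$ is a nonzero effective divisor on an abelian variety, so $O(P_A)\notin \mathrm{Pic}^0(A)$; by the standard descent, $O(P_A)$ becomes ample, up to a $\mathrm{Pic}^0$-twist, on a positive-dimensional abelian quotient $A/K(P_A)^0$, giving $\kappa(A,P_A)\geq 1$.

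Now suppose $P$ dominates $A$. Then $P$ is not invariant under the torus $T\subset G$ (the kernel of $p|_G$), because any $T$-invariant prime divisor of $G$ is the preimage of a prime divisor on $A$. The action of $T$ on $G$ by translation extends to a fiberwise action on $\Pp_A$ preserving $H$. For each $t\in T$, the class $[t^*O(\bar P)]-[O(\bar P)]$ lies in $\mathrm{Pic}^0(\Pp_A)=\mathrm{Pic}^0(A)$, defining a homomorphism $T\to\mathrm{Pic}^0(A)$; since $T$ is affine and $\mathrm{Pic}^0(A)$ is projective abelian, this homomorphism is trivial, so $t^*O(\bar P)\cong O(\bar P)$ for every $t\in T$. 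The morphism $t\mapsto t\cdot\bar P$ from $T$ to $|O(\bar P)|$ is then well defined and non-constant, whence $\dim|O(\bar P)|\geq 1$ and $\kappa(\Pp_A,\bar P)\geq 1$. The only subtlety is the initial reduction from $\overline{\kappa}(G\setminus P)$ to $\kappa(\Pp_A,\bar P)$ when $(\Pp_A,H+\bar P)$ fails to be log smooth, but this is handled by the discrepancy comparison in the first paragraph.
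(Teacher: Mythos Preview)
Your reduction in the first paragraph has a genuine gap. The pair $(\Pp_A,H)$ is log canonical, not canonical: an exceptional divisor lying over a stratum of $H$ has log discrepancy $0$, not $\geq 1$. Hence in $K_Y+D_Y\sim\tilde P+E'$ the coefficients of $E'$ are only $\geq 0$, and they vanish precisely along exceptionals whose center is a stratum of $H$. If $\bar P$ passes through such a stratum (which it typically does in the horizontal case), the corresponding multiplicity of $\pi^*\bar P$ is positive while the coefficient in $E'$ is zero, so no multiple of $K_Y+D_Y$ dominates $\pi^*\bar P$. In fact the inequality $\overline{\kappa}(G\setminus P)\geq\kappa(\Pp_A,\bar P)$ you are aiming for is false: take $G=(\Cc^*)^2\subset\Pp^1\times\Pp^1$ and $P$ the diagonal $\{x=y\}$. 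Then $\bar P$ is a $(1,1)$-curve, so $\kappa(\Pp_A,\bar P)=2$, while $G\setminus P\cong\Cc^*\times(\Cc^*\setminus\{1\})$ via $(x,y)\mapsto(x,y/x)$, giving $\overline{\kappa}(G\setminus P)=1$. Your torus-translate argument correctly shows $\kappa(\Pp_A,\bar P)\geq 1$, but that is not the quantity you need to bound from below.

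Your vertical case is fine and matches the paper's Case~1: when $P$ is vertical, $\bar P$ contains no stratum of $H$, and then the comparison $\overline{\kappa}(G\setminus P)=\kappa(K_{\Pp_A}+H+\bar P)=\kappa(\bar P)$ does go through (this is Lemma~\ref{lem:KappaBar}). For the horizontal case the paper avoids the failed reduction entirely: it picks a one-parameter subgroup $\Cc^*\subset G$ such that $P$ dominates the quasi-abelian quotient $G'=G/\Cc^*$, notes that a general fiber $F$ of $G\setminus P\to G'$ is $\Cc^*$ minus a nonempty finite set (so $\overline{\kappa}(F)\geq 1$), and concludes via Kawamata's addition formula that $\overline{\kappa}(G\setminus P)\geq\overline{\kappa}(F)+\overline{\kappa}(G')\geq 1$.
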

\begin{proof}
The idea is from \cite[Proof of Theorem 1.3, Step 2]{fujino2024quasi}. Let $A$ be the compact part of $G$ and let $p:G\to A$ be the natural projection.

Case 1. $P$ is vertical over $A$. Then $P=p^*Q$ for some prime divisor $Q$ on $A$. Let $(\Pp_A,H)$ be the canonical compactification of $G$. Let $\overline{P}$ be the closure of $P$ in $\Pp_A$. Since any lc center of $(\Pp_A,H)$ is horizontal over $A$, $\overline{P}$ contains no lc centers of $(\Pp_A,H)$. Then $\overline{\kappa}(G\setminus P)=\kappa(K_{\Pp_A}+H+\overline{P})=\kappa(\overline{P})=\kappa(Q)>0$ by Lemma \ref{lem:KappaBar}.
 
Case 2. $P$ is horizontal over $A$. Then there is a subgroup $\Cc^*\subset G$ such that $P$ dominates the quasi-abelian variety $G':=G/\Cc^*$. Let $F$ be a general fiber of $G\setminus P\to G'$. Then $F$ is a curve with $\overline{\kappa}(F)>0$. Hence $\overline{\kappa}(G\setminus P)\geq\overline{\kappa}(F)+\overline{\kappa}(G')=\overline{\kappa}(F)>0$ by Kawamata \cite[Theorem 1]{kawamata1977addition} (see also Fujino \cite[Chapter 5]{fujino2020iitaka}).\end{proof}

\begin{lem}\label{lem:KappaBar}
Let $(Y,D_Y)$ be a projective log smooth  pair with $D_Y$ reduced. Assume that $\kappa(K_Y+D_Y)\geq0$. Let $\Sigma\subset Y$ be a reduced divisor which contains no lc centers of $(Y,D_Y)$, then $
\overline{\kappa}(Y\setminus D_Y+ \Sigma)=\kappa(K_Y+D_Y+\Sigma)$.
\end{lem}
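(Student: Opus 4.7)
The plan is to compute $\overline{\kappa}(Y\setminus(D_Y+\Sigma))$ on a log resolution of $(Y,D_Y+\Sigma)$ and to identify its value with $\kappa(K_Y+D_Y+\Sigma)$ by producing two effective representatives that share the same support, after which the support-monotonicity fact recalled in Section~2 closes the argument.

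First, using $\kappa(K_Y+D_Y)\ge 0$, I would pick an effective $\Qq$-divisor $B\sim_{\Qq}K_Y+D_Y$ and take a log resolution $\pi\colon Y'\to Y$ of $(Y,D_Y+\Sigma)$ whose blowup centers all lie in the non-SNC locus, so $\pi$ is an isomorphism over the log smooth locus of $(Y,D_Y+\Sigma)$. Setting $D_{Y'}:=\pi^{-1}_*D_Y+\pi^{-1}_*\Sigma+\Ex(\pi)$ gives $\overline{\kappa}(Y\setminus(D_Y+\Sigma))=\kappa(K_{Y'}+D_{Y'})$.

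The two observations that drive the argument are: (i) because $(Y,D_Y)$ is already log smooth, the non-SNC locus of $(Y,D_Y+\Sigma)$ is contained in $\Sigma$; hence every $\pi$-exceptional divisor $E$ has center $Z_E\subseteq\Sigma$, so $\sigma_E:=\mult_E(\pi^*\Sigma)\ge 1$; (ii) because $\Sigma$ contains no lc center of $(Y,D_Y)$, the center $Z_E$ cannot be a stratum of $D_Y$, and the standard discrepancy computation for a log smooth pair with reduced boundary then forces $a(E,Y,D_Y)>-1$, so $f_E:=1+a(E,Y,D_Y)>0$.

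To finish, the log-smooth discrepancy formula $K_{Y'}+\pi^{-1}_*D_Y+\Ex(\pi)=\pi^*(K_Y+D_Y)+\sum_E f_E\,E$, together with $K_Y+D_Y\sim_{\Qq}B$, yields the effective representative
\[
\Theta_1\,:=\,\pi^{-1}_*B+\pi^{-1}_*\Sigma+\sum_E(b_E+f_E)\,E\,\sim_{\Qq}\,K_{Y'}+D_{Y'},\qquad b_E:=\mult_E(\pi^*B)\ge 0,
\]
while $\Theta_2\,:=\,\pi^*(B+\Sigma)=\pi^{-1}_*B+\pi^{-1}_*\Sigma+\sum_E(b_E+\sigma_E)\,E$ represents $\pi^*(K_Y+D_Y+\Sigma)$. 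By (i) and (ii) both sums have strictly positive coefficient on every $\pi$-exceptional divisor, so $\Supp(\Theta_1)=\Supp(\Theta_2)$ and therefore $\kappa(\Theta_1)=\kappa(\Theta_2)$, which gives $\overline{\kappa}(Y\setminus(D_Y+\Sigma))=\kappa(K_Y+D_Y+\Sigma)$. The main subtlety is observation (ii): both hypotheses enter here, log smoothness of $(Y,D_Y)$ being what identifies strata of $D_Y$ with the centers of exceptional divisors of discrepancy $-1$, and the no-lc-center assumption being what excludes $Z_E$ from being such a stratum once $Z_E\subseteq\Sigma$.
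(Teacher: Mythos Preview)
Your argument is correct and is essentially the paper's proof repackaged: both pass to a log resolution of $(Y,D_Y+\Sigma)$ and exploit the support principle from Section~2, with the no-lc-center hypothesis ensuring that exceptional discrepancies over $\Sigma$ exceed $-1$ and with $\kappa(K_Y+D_Y)\ge 0$ supplying an effective representative. The only cosmetic difference is that the paper shrinks the coefficient of $\Sigma$ to a small $\epsilon$ (so that $(Y,D_Y+\epsilon\Sigma)$ is lc) and applies the support principle twice to descend to $Y$, whereas you compare the two explicit effective representatives $\Theta_1$ and $\Theta_2$ in a single step.
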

\begin{proof}
Let $\mu:Z\to Y$ be a log resolution of $(Y,D_Y+\Sigma)$. We may assume that $\mu$ is an isomorphism over $Y\setminus D_Y+ \Sigma$. Let $\epsilon>0$ be a sufficiently small rational number. Then
\begin{align*}
\overline{\kappa}(Y\setminus D_Y+ \Sigma)
&
=\overline{\kappa}(Z\setminus\mu^{-1}(D_Y+\Sigma))
\\&=\kappa(K_Z+\mu^{-1}(D_Y+\Sigma))
\\&=\kappa(K_Z+\mu^{-1}_*D_Y+\mu^{-1}_*\Sigma+\Ex(\mu))
\\&=\kappa(K_Z+\mu^{-1}_*D_Y+\epsilon\mu^{-1}_*\Sigma+\Ex(\mu))\text{ (by $\kappa(K_Z+\mu^{-1}_*D_Y+\Ex(\mu))\geq0$)} 
\\&=\kappa(K_Y+D_Y+\epsilon\Sigma) \text{ (by $(Y,D_Y+\epsilon\Sigma)$ is lc)}. 
\\&=\kappa(K_Y+D_Y+\Sigma) \text{ (by $\kappa(K_Y+D_Y)\geq0$)}. 
\end{align*}
\end{proof}

\begin{lem}\label{lem:covering}
Let $\alpha:Z\to V$ and $\beta:W\to V$ be fibrations between projective normal varieties, and let $Z\dashrightarrow W$ be a birational contraction over $V$. Let $E$ be an effective $\Qq$-Cartier $\Qq$-divisor on $Z$ which is  exceptional over $W$, and let $L$ be a $\Qq$-Cartier $\Qq$-divisor on $V$. Then $\kappa(\alpha^*L+E)=\kappa(L)$.
\end{lem}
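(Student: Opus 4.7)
The plan is to compare Kodaira dimensions on $Z$, $W$, and $V$ through a common resolution of the birational map $\phi:Z\dashrightarrow W$. Pick $p:Y\to Z$ and $q:Y\to W$ birational morphisms with $Y$ normal (or even smooth) resolving $\phi$. Since $\phi$ is defined over $V$, we have $\alpha\circ p=\beta\circ q$, so $p^*\alpha^*L=q^*\beta^*L$. Because $p$ is a birational morphism between normal varieties and $\alpha^*L+E$ is $\mathbb{Q}$-Cartier, $\kappa(\alpha^*L+E)=\kappa_Y(p^*(\alpha^*L+E))=\kappa_Y(q^*\beta^*L+p^*E)$.

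The crucial geometric step is to show that $p^*E$ is $q$-exceptional on $Y$. Decompose $p^*E=p^{-1}_*E+F$ with $F$ effective and $p$-exceptional (effective because $p^*E$ is effective for each component of $E$ with nonnegative coefficient). For $p^{-1}_*E$: since every component of $E$ lies in $\Ex(\phi)$, its image under $\phi$ has codimension $\geq 2$ in $W$, so the strict transform on $Y$ maps under $q$ into a codimension $\geq 2$ set, hence is $q$-exceptional. For $F$: any $p$-exceptional prime divisor $D\subset Y$ is automatically $q$-exceptional, since otherwise $q(D)\subset W$ would be a prime divisor whose strict transform on $Z$ under $\phi^{-1}$ lies in codimension $\geq 2$, i.e., $\phi^{-1}$ would extract $q(D)$, contradicting $\Ex(\phi^{-1})=0$. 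Hence $\Supp(p^*E)$ is $q$-exceptional.

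Now push down to $W$. For $m$ sufficiently divisible so that $mL$ and $mE$ are Cartier integral, the projection formula gives
\[q_*\mathcal{O}_Y(m(q^*\beta^*L+p^*E))=\mathcal{O}_W(m\beta^*L)\otimes q_*\mathcal{O}_Y(mp^*E).\]
Since $p^*E$ is effective and $q$-exceptional and $W$ is normal, I would argue $q_*\mathcal{O}_Y(mp^*E)=\mathcal{O}_W$ by Hartogs: a rational function on $q^{-1}(U)$ with poles only along $\Supp(p^*E)$ has poles on $U\subset W$ in codimension $\geq 2$ and therefore extends. Combining with $h^0(Y,m(q^*\beta^*L+p^*E))=h^0(W,q_*\mathcal{O}_Y(m(q^*\beta^*L+p^*E)))$ yields $h^0(Y,m(q^*\beta^*L+p^*E))=h^0(W,m\beta^*L)$ for all sufficiently divisible $m$, so $\kappa(\alpha^*L+E)=\kappa(\beta^*L)=\kappa(L)$, where the last equality uses surjectivity of $\beta$.

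The main obstacle is the second step, the identification of $p$-exceptional with $q$-exceptional divisors on the common resolution; this is precisely where the birational-contraction hypothesis is used, and it is the single place in the argument where one could go wrong. Once that is in hand, the remainder is the standard projection-formula-plus-normality reduction.
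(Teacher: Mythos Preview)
Your proof is correct and follows the same route as the paper: resolve the map $Z\dashrightarrow W$ by a common model, observe that the pullback of $E$ becomes exceptional over $W$ (this is exactly where the birational-contraction hypothesis enters, as you note), and then use the standard fact that an effective exceptional divisor does not affect the Kodaira dimension of a pullback. The paper compresses these steps into a single displayed chain of equalities and simply asserts that $u^*E$ is exceptional over $W$; your decomposition $p^*E=p^{-1}_*E+F$ and the projection-formula/Hartogs justification spell out what the paper leaves implicit.
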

\begin{proof}
Let $u:X\to Z$ be a birational morphism resolving the indeterminacies of $Z\dashrightarrow W$ and let $v:X\to W$ be the induced morphism. Since $Z\dashrightarrow W$ is a birational contraction, $u^*E$ is exceptional over $W$. Hence \[
\kappa(\alpha^*L+E)=\kappa(u^*\alpha^*L+u^*E)=\kappa(v^*\beta^*L+u^*E)
=\kappa(\beta^*L)
=\kappa(L).\]\end{proof}

\begin{thm}[A stronger dlt version of Theorem \ref{thm2}]\label{thm2dlt}
Let $(X,D_X)$ be a projective $\Qq$-factorial dlt pair with $K_X+D_X\sim_{\Qq}0$. Let $\Delta_X\leq \lf D_X\rf$ be a reduced divisor. Let $\alpha:X\setminus \Delta_X\to G$ be the quasi-Albanese morphism. Let $A$ be the compact part of $G$. Let $(\Pp_A,H)$ be the canonical compactification of $G$. Then there are 

\begin{itemize}
\item a projective $\Qq$-factorial dlt pair $(W,D_W)$ 
with $K_W+D_W\sim_{\Qq}0$ and 
\item a crepant birational contraction $\mu:(W,D_W)\dashrightarrow(X,D_X)$ with $D_W=\mu^{-1}_*D_X+\Ex(\mu)$,
\end{itemize} such that the following holds.
\begin{enumerate}
    \item The rational map $g:W\dashrightarrow \Pp_A$ is a fibration,
    \item The canonical bundle formula for $(W,D_W)\to \Pp_A$ writes $K_W+D_W\sim_{\Qq}g^*(K_{\Pp_A}+H)$, and the restriction $(W,D_W)|_{g^{-1}G}\to G$ is locally stable.
    \item Let $\Delta_W=\mu^{-1}_*\Delta_X+\Ex(\mu)$. Then $\Delta^v_W=D_W^v=\lf D_W^v\rf=g^{-1}H$ and the morphism $W\setminus \Delta_W\to G$ is the quasi-Albanese morphism of $W\setminus \Delta_W$. 
\end{enumerate} 

\end{thm}
\begin{proof}
Let $\pi:Z\to X$ be a log resolution such that the rational map $f:Z\dashrightarrow\Pp_A$ is a morphism. By Theorem \ref{lem:KawFujLogSmooth}(2), $f$ is a fibration. Let $D_Z=\pi^{-1}_*D_X+\Ex(\pi)$ and let $\Delta_Z=\pi^{-1}_*\Delta_X+\Ex(\pi)$.
Then $K_Z+D_Z=\pi^*(K_X+D_X)+E_Z$ for some effective and exceptional divisor $E_Z$ and $f^{-1}H\subset \Delta_Z\subset\lf D_Z\rf$. 

By Theorem \ref{thm:key3}, there is a good minimal model $(W,D_W)$ of $(Z,D_Z)$ over $\Pp_A$ satisfying (1) and (2) and such that the birational map $\mu:(W,D_W)\dashrightarrow(X,D_X)$ is a crepant birational contraction with $D_W=\mu^{-1}_*D_X+\Ex(\mu)$. 

By $\lf D_W^v\rf= g^{-1}H\subset \Delta_W^v\subset\lf D_W^v\rf$, we have $\Delta_W^v=\lf D_W^v\rf$. Let $W\setminus\Delta_W\to G'$ be the quasi-Albanese morphism. Since $W\setminus\Delta_W\to G$ is a fibration, the induced morphism $G'\to G$ is a fibration. By Lemmas \ref{lem:compare2} and \ref{lem:LcToLogSmooth}, we have $\dim G'=q(W,\Delta_W)=q(X,\Delta_X)=\dim G$. Hence the fibration $G'\to G$ is an isomorphism, i.e., $W\setminus\Delta_W\to G$ is the quasi-Albanese morphism.
\end{proof}

\begin{proof}[Proof of Theorem \ref{thm2}]
Let $\pi:Z\to X$ be a log resolution such that $\pi^{-1}\nklt$ is a divisor and the rational map $f:Z\dashrightarrow\Pp_A$ is a morphism. By Theorem \ref{lem:KawFujLogSmooth}(3), $f$ is a fibration. Let $D_Z=\pi^{-1}_*D_X+\Ex(\pi)$. Then $K_Z+D_Z=\pi^*(K_X+D_X)+E_Z$ for some effective and exceptional divisor $E_Z$ and $f^{-1}H\subset \pi^{-1}\nklt\subset\lf D_Z\rf$. 

By Theorem \ref{thm:key3}, there is a good minimal model $(W,D_W)$ of $(Z,D_Z)$ over $\Pp_A$ satisfying (1) and (2) and such that the birational map $\mu:(W,D_W)\dashrightarrow(X,D_X)$ is a crepant birational contraction with $D_W=\mu^{-1}_*D_X+\Ex(\mu)$. 

By the same reasoning as in the previous Theorem \ref{thm2dlt}, we have $W\setminus\lf D_W\rf\to G$ is the quasi-Albanese morphism.
\end{proof}

\begin{proof}[Proof of Theorem \ref{thm1}]
Keep the notations introduced in the proof of Theorem \ref{thm2}. 

(1) Let $P\subset G$ be a prime divisor and let $\overline{P}$ be its closure in $\Pp_A$. Then $\overline{P}\not\subset H$. Since $\lf D_W^v\rf=g^{-1}H$, there is a prime divisor $\overline{Q}_W\subset W$ such that $\overline{Q}_W\not\subset \lf D_W\rf$ and $\overline{Q}_W$ dominates $\overline{P}$. Since $\Ex(\mu)\leq\lf D_W\rf$, $\overline{Q}:=\mu_*\overline{Q}_W$ is a prime divisor on $X$ such that $\overline{Q}\not\subset\lf D_X\rf$ (in particular $\overline{Q}\not\subset\nklt(X,D_X)$) and $\overline{Q}$ dominates $P$.

(2) Let $\Sigma\subset G$ be a reduced snc divisor and let $\overline{\Sigma}$ be its closure in $\Pp_A$. Since $(W,D_W)|_{g^{-1}G}\to G$ is locally stable, we have $(W,D_W+g^*\overline{\Sigma})|_{g^{-1}G}$ is lc. Hence $(Z,D_Z-E_Z+f^*\overline{\Sigma})|_{f^{-1}G}$ is sub-lc. By \[(K_Z+D_Z-E_Z+f^*\overline{\Sigma})|_{Z\setminus \pi^{-1}\nklt}=(\pi|_{Z\setminus \pi^{-1}\nklt})^*(K_{X\setminus\nklt}+D_X|_{X\setminus\nklt}+\alpha^*\Sigma)\] and $Z\setminus \pi^{-1}\nklt\subset Z\setminus f^{-1}H=f^{-1}{G}$, we have $(X\setminus\nklt+D_X|_{X\setminus\nklt},\alpha^*\Sigma)$ is lc. Hence $\alpha:(X\setminus\nklt,D_X|_{X\setminus\nklt})\to G$ is locally stable by \cite[Corollary 4.55]{Kollar-Families}. It is flat with slc fibers by \cite[Definition-Theorem 4.7]{Kollar-Families}
\end{proof}

\subsection{Proofs of Lemma \ref{lem:Comp&Alb} and Theorem \ref{cor1}}
\begin{lem}\label{lem:ComputeOfq(X,D)}
Let $(X,D_X)$ be a projective log smooth pair with $D_X$ reduced. Write $D_X=\sum_{i=1}^mD_i$ as a sum of prime divisors. Then 
\[q(X,D_X)-q(X)=\{(a_1,\cdots,a_m)\in\Cc^m\mid \sum_ia_ic_1(O_X(D_i))=0\text{ in }H^1(\Omega_X^1)\},\text{ and}\] 
\[m-(q(X,D_X)-q(X))=\text{the dimension of the subspace spanned by }c_1(O_X(D_i))\text{'s in }H^1(\Omega_X^1).\] 

\end{lem}
\begin{proof}
Consider the exact sequence \[0\to \Omega_X^1\to\Omega_X^1(\log D_X)\to \bigoplus_{i=1}^mO_{D_i}\to 0,\] 
and the associated long exact sequence:
\[0\to H^0(\Omega_X^1)\to H^0(\Omega_X^1(\log D_X))\to \bigoplus_{i=1}^mH^0(O_{D_i})\xrightarrow{\delta} H^1(\Omega_X^1).\] 
Since $\delta(1|_{D_i})=c_1(O_X(D_i))\in H^1(\Omega_X^1)$ (cf. \cite[Chapter III Exercise 7.4]{HartshorneGTM52}), where $1|_{D_i}$ is the constant function on $D_i$ with value $1$, the Lemma follows.
\end{proof}

\begin{lem}[{cf. \cite[Lemma 2.41]{enwright2024log}}]\label{lem:compare:complx}
Let $(X,D_X)$ be a projective lc pair. Let $\pi:Z\to X$ be a log resolution and let $D_Z=\pi^{-1}_*D_X+\Ex(\pi)$. Then $c(Z,D_Z)=c(X,D_X)$.
\end{lem}
\begin{proof}
Write $\Ex(\pi)=\sum_{i=1}^mE_i$ as a sum of prime divisors. Since $|\lf D_Z\rf|=|\lf D_X\rf|+m$, to prove the Lemma, it suffices to show that $\rank \la\lf D_Z\rf\ra=\rank \la\lf D_X\rf\ra+m$. Since $\pi_*:\la\lf D_Z\rf\ra\otimes\Qq\to \la\lf D_X\rf\ra \otimes\Qq$ is surjective and $E_i$'s form a basis of its kernel (by the negativity Lemma \cite[3.39]{KM98}), the Lemma follows.
\end{proof}

\begin{proof}[Proof of Lemma \ref{lem:Comp&Alb}]
Let $\pi:Z\to X$ be a log resolution such that $\pi^{-1}\nklt(X,D_X)$ is a divisor. Let $D_Z=\pi^{-1}_*D_X+\Ex(\pi)$. By the previous two Lemmas, we have
$c(X,D_X)=c(Z,D_Z)=\dim Z-(q(Z,D_Z)-q(Z))$. By Lemma \ref{lem:LcToLogSmooth}, $X\setminus\nklt(X,D_X)$ and $Z\setminus\lf D_Z\rf$ have the isomorphic quasi-Albanese variety $G$. By Iitaka's construction, we have $\dim G=q(Z,D_Z)$ and $\dim A=q(Z)$. Hence $c(X,D_X)=\dim X-(\dim G-\dim A)$.
\end{proof}

\begin{proof}[Proof of Theorem \ref{cor1}]Let $\Delta_X\leq\lf D_X\rf$ be a reduced divisor. Let $\pi:Z\to X$ be a Hacon's $\Qq$-factorial dlt modification \cite[Proposition 3.3.1]{Hacon2014ACC}. Let $D_Z=\pi^{-1}_*D_X+\Ex(\pi)$ and let $\Delta_Z=\pi^{-1}_*\Delta_X+\Ex(\pi)$. Then $K_Z+D_Z=\pi^*(K_X+D_X)$, $\kappa(K_Z+D_Z)=0$ and $\Delta_Z\leq \lf D_Z\rf$. 

By Lemma \ref{lem:compare:complx}, we have $c(X,\Delta_X)=c(Z,\Delta_Z)$. By Proposition \ref{lem:Comp&Alb}, we have $c(Z,\Delta_Z)=\dim Z-(\dim G-\dim A)$, where $G$ and $A$ are the quasi-Albanese variety and the Albanese variety of $Z\setminus \Delta_Z$ and $Z$ respectively. By \ref{lem:KawFujLogSmooth}(2), we have $\dim Z\geq \dim G$. Hence $c(X,\Delta_X)\geq0$.
 
Assume that $c(X,\Delta_X)=0$. Then $\dim Z=\dim G$ and $\dim A=0$. Hence $G\cong(\Cc^*)^n$. The quasi-Albanese morphism $Z\setminus\Delta_Z\to (\Cc^*)^n$ is birational, so $X$ is rational. Assume further that $K_X+D_X\sim_{\Qq}0$. Then $K_Z+D_Z\sim_{\Qq}0$. Set $((\Pp^1)^n,H):=(\Pp^1_{z_1}\times\cdots\times\Pp^1_{z_n},\sum_i((z_i=0)+(z_i=\infty)))$. By Theorem \ref{thm2dlt}, there are
\begin{itemize}
\item a $\Qq$-factorial dlt pair $(W,D_W)$ with $K_W+D_W\sim_{\Qq}0$,
\item a crepant birational contraction $\mu:(W,D_W)\dashrightarrow (Z,D_Z)$ with $D_W=\mu^{-1}_*D_Z+\Ex(\mu)$, and

\item a birational morphism $g:(W,D_W)\to ((\Pp^1)^n,H)$ with $K_W+D_W\sim_{\Qq}g^*(K_{(\Pp^1)^n}+H)$, and $\Delta_W=\lf D_W\rf=D_W=g^{-1}H$, where $\Delta_W=\mu^{-1}_*\Delta_Z+\Ex(\mu)$.

\end{itemize}
Hence we have $\Delta_Z=\lf D_Z\rf=D_Z$, $\Delta_X=\lf D_X\rf=D_X$ and the four pairs $(X,D_X)$, $(Z,D_Z)$, $(W,D_W)$  $((\Pp^1)^n,H)$ are crepant birational to each other.

Let $P\subset W$ be a prime divisor which is exceptional over $(\Pp^1)^n$, then $P\subset g^{-1}H=D_W$. By $-1=a(P,W,D_W)=a(P,(\Pp^1)^n,H)$, we have $P$ is toric. By \cite[Lemma 2.3.2]{Brown2018GeomToric}, we have $(W,D_W)$, $(Z,D_Z)$ and $(X,D_X)$ are toric. 

By taking a log resolution of $(Z,D_Z)$ and Lemma \ref{lem:LcToLogSmooth}, we known that $Z\setminus\lf D_Z\rf$ and $X\setminus\nklt(X,D_X)$ have the same quasi-Albanese variety $(\Cc^*)^n$. By Theorem \ref{thm1}, we have the quasi-Albanese morphism $X\setminus\nklt(X,D_X)\to (\Cc^*)^n$ is birational and isomorphic in codimension one.
\end{proof}



\bibliographystyle{alpha}

\bibliography{bibfile}

\end{document}